\newtheorem{theorem}{Theorem}
\newcommand{\R}{\mathbb{R}}
\newcommand{\Z}{\mathbb{Z}}
\begin{document}

\title{Data-driven Prediction of Relevant Scenarios for Robust Combinatorial Optimization}

\author[1]{Marc Goerigk\footnote{marc.goerigk@uni-siegen.de, corresponding author, supported by the Deutsche Forschungsgemeinschaft (DFG) through grant GO 2069/1-1}}
\author[2]{Jannis Kurtz\footnote{j.kurtz@uva.nl}}

\affil[1]{Network and Data Science Management, University of Siegen, Unteres Schlo{\ss} 3, 57072 Siegen, Germany}
\affil[2]{Amsterdam Business School, University of Amsterdam, Plantage Muidergracht 12, 1018 TV, Amsterdam, Netherlands}

\date{}

\maketitle

\begin{abstract}
We study iterative methods for (two-stage) robust combinatorial optimization problems with discrete uncertainty. We propose a machine-learning-based heuristic to determine starting scenarios that provide strong lower bounds. To this end, we design dimension-independent features and train a Random Forest Classifier on small-dimensional instances. Experiments show that our method improves the solution process for larger instances than contained in the training set and also provides a feature importance-score which gives insights into the role of scenario properties.
\end{abstract}

\noindent\textbf{Keywords:} Robust optimization; two-stage robust optimization; data-driven optimization; machine learning for optimization

\section{Introduction}
Optimization under uncertainty is an important research field especially due to its relevance in practical applications from operations research. In the real world many parameters of an optimization problem can be uncertain, e.g. the demands, returns or traffic situations or any other parameters which are not precisely known due to measurement or rounding errors. It was shown that hedging against possible perturbations in the problem parameters is essential, since already small perturbations can lead to a large violation of the constraints \cite{ben2009robust}. Driven by the seminal works \cite{soyster1973,kouvelis,roconv,bentalRobust,BertsimasS04} robust optimization evolved to be one of the most popular approaches to tackle uncertainty in optimization problems by finding solutions which are worst-case optimal and feasible for all parameters of a pre-defined uncertainty set; see \cite{bertsimas2011theory,buchheimrobust,gabrel2014recent} for a literature overview. Later the classical robust optimization approach was extended to the two-stage robust optimization approach (also called adaptive robust optimization) in \cite{ben2004adjustable} which has been extensively studied from then on; see e.g. \cite{bertsimas16_2,PostekH16,hanasusanto2015k,kammerling2020oracle,goerigk2019robust,yanikouglu2018survey}.
While most of the works mentioned above consider the case of convex uncertainty sets, discrete uncertainty sets naturally appear in practical applications where often a finite set of historical observations of the uncertain parameters is given; see \cite{aissi_minmax_survey,buchheimrobust,kouvelis}. Based on these historical observations uncertainty sets can be constructed by data-driven approaches from statistics or machine learning \cite{goerigk2020data,chassein2019algorithms,shang2017data,bertsimas2018data,cheramin2021data}.

While dualizing the robust objective function often leads to compact reformulations of the robust optimization problem, in some cases this derivation is not possible (e.g. in the two-stage case with integer recourse variables or for discrete uncertainty sets) or not desirable (see \cite{fischetti2012cutting}). In this case the robust optimization problem can alternatively be solved by scenario generation approaches \cite{borrero2021modeling} while the two-stage robust problem can be solved by column-and-constraint algorithms \cite{zeng2013solving}. In both cases the idea is to iteratively generate new scenarios by solving the adversarial problem and adding them to a master problem. While both approaches often perform well, one of the main drawbacks is that with an increasing number of iterations the size of the master problem increases, which often leads to problem sizes which cannot be solved in appropriate time; see \cite{kammerling2020oracle}. Hence an important tool to improve the efficiency of the iterative methods is the choice of a good starting set of scenarios. 

In this work we introduce what we call the \textit{Relevant Scenario Recognition Problem} which aims at finding a set of $k$ scenarios which maximizes the lower bound given by the master problem of the considered iterative algorithm. This is motivated by the fact that starting the iterative algorithms with the largest possible lower bound leads to a small optimality gap and to a reduced number of iterations. Additionally we can ensure that even if the algorithm does not terminate in an appropriate number of iterations we find a solution with small optimality gap after the first iteration.     

\textit{Our contributions are the following.} We study (two-stage) robust combinatorial optimization problems with objective uncertainty and introduce the Relevant Scenario Recognition Problem, where we aim at finding the strongest subset of scenarios in the master problem of a given size. We show that this problem is NP-hard and that it can be reformulated as a linear binary program. Since solving the problem exactly is intractable we show how to create a set of dimension-independent features which can be used to train Random Forests on a set of already solved small-dimensional problem instances. In computational experiments using two-stage shortest path and traveling salesperson problems with uncertain costs, we evaluate the effect of our method in comparison with random starting scenarios and starting scenarios that maximize the sum of costs and show that our method improves the computational performance even for larger-dimensional instances of the problem than contained in the training set. Our experiments show that the selection of the right starting scenarios is crucial for the subsequent behavior of the iterative method. Additionally analyzing the feature importance our method gives new insights into the important properties of scenarios in robust optimization.

\section{Preliminaries}\label{sec:preliminaries}

\subsection{Robust Optimization}
We consider deterministic optimization problems of the form
\begin{equation}\label{eq:deterministicProblem}\tag{P}
\begin{aligned}
\min \ & c^\top x \\ 
s.t. \quad & x \in X 
\end{aligned}
\end{equation}
where $c\in\R^n$ is a given cost vector and $X \subset \R^n$ an arbitrary feasible region. 
Note that we can choose $X\subset \Z^n$ to model integer problems and for combinatorial optimization problems we often have $X\subset \{ 0,1\}^n$. We assume that the parameters of the cost vector $c$ are uncertain and all possible realizations are given by a discrete uncertainty set $U=\left\{ c^1, \ldots ,c^m\right\}$ where each $c^i\in \R^{n}$ is called a \textit{scenario}. The aim in the classical robust optimization approach is to calculate a solution $x^*\in X$ which minimizes the worst-case objective value over all scenarios in $U$, i.e. we want to find an optimal solution of problem
\begin{equation}\label{eq:robustProblem}\tag{RP}
\begin{aligned}
\min_{x\in X} \ & \max_{c\in U} \ c^\top x .
\end{aligned}
\end{equation}

Note that uncertainty in the objective is a common modeling assumption for combinatorial problems. Corresponding robust combinatorial optimization problems with discrete uncertainty are known to be computationally hard, even if 
we have only two scenarios; see \cite{kouvelis,buchheimrobust}. Despite this theoretical drawback these problems are often solved successfully using an iterative scenario generation approach. In this algorithm we alternately calculate an optimal solution $x^*\in X$ of problem~\eqref{eq:robustProblem} for a subset of scenarios $U'\subset U$, which we call the \textit{master problem}:
\begin{equation}\label{eq:masterproblemRO}\tag{MP}
\min_{x\in X} \ \max_{c\in U'} \ c^\top x.
\end{equation}
Afterwards the \textit{adversarial problem} is solved to find a new scenario $c^*\in U$ which maximizes the objective value $c^\top x^*$, i.e. we solve
\begin{equation}\label{eq:adversarialproblemRO}\tag{AP}
\max_{c\in U} \ c^\top x^*
\end{equation}
and update $U':= U' \cup \{ c^*\}$. We iterate in this way until we cannot find any scenario anymore in $U$ which improves the objective value of \eqref{eq:masterproblemRO} in which case the current solution $x^*$ is robust optimal. Finally note that the optimal values of \eqref{eq:masterproblemRO} define a non-decreasing sequence of lower bounds for the optimal value of \eqref{eq:robustProblem} while the optimal values of \eqref{eq:adversarialproblemRO} are upper bounds. Hence at each iteration an optimality gap is provided.

\subsection{Two-stage Robust Optimization}
In the two-stage setting we consider deterministic optimization problems of the form
\begin{equation}\label{eq:deterministicProblem2}\tag{2P}
\begin{aligned}
\min \ & c^\top x + d^\top y \\ 
s.t. \quad & Ax + D y  \le b \\
& x\in X, y\in Y 
\end{aligned}
\end{equation}
where $c\in\R^n, d\in\R^q$ are given cost vectors, $X\subset\R^{n}$ and $Y\subset \R^q$ are arbitrary sets which can be integer or binary, $A\in\R^{p\times n}$, $D\in \R^{p\times q}$ and  $b \in \R^p$. We assume that only the parameters of cost vector $d$ are uncertain. The variables $x\in \R^n$ are the first-stage solutions which have to be fixed \textit{here-and-now}. Variables $y\in \R^q$ are the second-stage variables, also called \textit{wait-and-see} variables, which can be adjusted flexibly after all uncertain parameters are known. Again we assume that all possible realizations of the uncertain cost parameters are given by a discrete uncertainty set $U=\left\{ d^1, \ldots ,d^m\right\}$ where $d^i\in\R^{q}$ are again called \textit{scenarios}. The aim in two-stage robust optimization is then to calculate a first-stage solution $x^*$ such that for every scenario $d^i\in U$ there exists a feasible second-stage solution $y^i$ such that the worst possible objective value over all scenarios is minimized. This can be modeled by problem 
\begin{equation}\label{eq:robust2StageProblem}\tag{2RP}
\begin{aligned}
\min \ & c^\top x + \mu \\ 
s.t. \quad & (d^i)^\top y^i \le \mu \quad \forall \ i\in [m]\\
& A x + D y^i\le b \quad \forall \ i\in [m]\\
& \mu\in\R, \ x\in X, \ y^i\in Y \ \forall \ i\in [m] .
\end{aligned}
\end{equation}
where we use the notation $[m]:=\left\{ 1,\ldots ,m\right\}$ for each $m\in \mathbb{N}$.

One of the most popular methods to solve two-stage robust problems is the column-and-constraint generation algorithm (CCG), first introduced in \cite{zeng2013solving} which can also be used in the case of discrete uncertainty sets. Similar to the scenario generation procedure described in the previous subsection, the idea is to iteratively generate new scenarios and add them to the master problem. Here each time we add a scenario we also have to add a new set of variables $y^i$. More precisely, we alternately calculate an optimal solution $(x^*,\mu^*)$ of problem \eqref{eq:robust2StageProblem} for the current subset of scenarios given by indices $\mathcal I\subset [m]$, which we call the \textit{master problem}, and afterwards an \textit{adversarial problem} is solved to find a new scenario $d^*$ which maximizes the objective value, i.e. we solve
\begin{equation}\label{eq:adversarialProblem2Stage}
\max_{d\in U} \ \min \left\{ d^\top y : Ax^* + Dy \le b, \ y\in Y\right\}  .
\end{equation}
Afterwards we add the index of $d^*$ to $\mathcal I$ and we iterate. If the optimal value of \eqref{eq:adversarialProblem2Stage} is smaller or equal to $\mu^*$, then the current solution $x^*$ is optimal for all scenarios in $U$. Finally, note that as in the one-stage case the optimal values of the master problems define a non-decreasing sequence of lower bounds for the original robust problem \eqref{eq:robustProblem}. On the other hand, the optimal value of the adversarial problem always returns the objective value of solution $x^*$ and is therefore a valid upper bound. Hence in each iteration of the CCG we can calculate an optimality gap to check how far we are at most from being optimal.

\section{The Relevant Scenario Recognition Problem}  
Solving the robust problem \eqref{eq:robustProblem} and especially the two-stage robust problem \eqref{eq:robust2StageProblem} can be computationally challenging. In several methods (e.g. the constraint generation or column-and-constraint generation method described in Section \ref{sec:preliminaries}) we iteratively solve a master problem containing a subset of scenarios $\mathcal I\subset [m]$ and afterwards an adversarial problem. While often the adversarial problem can be solved more efficiently, the master problem can be very challenging after some iterations since in each iteration we have to increase the size of the formulation by adding a new scenario. The increasing computational effort can be observed especially in the two-stage case; note that here we also have to add a new set of second-stage variables $y^i$. 
Even few iterations may lead to master problems which cannot be solved in appropriate time (see e.g. \cite{goerigk2020min,kammerling2020oracle}). 

Hence an important ingredient influencing the performance of the iterative methods is the selection of a ``good'' set of starting scenarios. If we can find a small set of starting scenarios which lead to a small optimality gap in the first iteration, then even if the algorithm gets stuck after a few iterations, we already have a solution with small optimality gap. To achieve this the idea is to find scenarios which maximize the lower bound given by the master problem in each iteration of the iterative method. More precisely, for a given $k\le m$ we want to find a set of $k$ starting scenarios such that the optimal value of the corresponding master problem is maximized, which we call the \textit{Relevant Scenario Recognition Problem} (RSRP). In the following, we study this problem for the robust optimization problem and for the two-stage robust optimization problem. 

We assume we have a given discrete uncertainty set $U=\left\{ c^1,\ldots ,c^m\right\}$. In the scenario generation method described in Section \ref{sec:preliminaries} we always obtain a lower bound to the original robust problem \eqref{eq:robustProblem} by solving the master problem \eqref{eq:masterproblemRO} 
for the current set of scenario indices $\mathcal I\subset [m]$. For a given $k\le m$ the RSRP is then given by
\begin{equation}\label{eq:BSAP_RO}\tag{RSRP-RO}
\max_{\substack{\mathcal I \subset [m]\\ |\mathcal I |\le k}} \ \min_{x\in X} \ \max_{i\in\mathcal I}  \ (c^i)^\top x .
\end{equation}

We first study the complexity of problem~\eqref{eq:BSAP_RO}.
\begin{theorem}\label{thm:complexityBSAP}
Let $X$ be a polyhedron given by an outer description. Problem~\eqref{eq:BSAP_RO} can be solved in polynomial time if $k$ is a constant value, but is NP-hard if $k$ is part of the input.
\end{theorem}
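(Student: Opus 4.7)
The plan is to handle the two directions separately. For the positive result with $k$ constant, I would simply enumerate: there are only $\binom{m}{k}=O(m^k)$ candidate subsets $\mathcal I\subset [m]$ of size at most $k$, and for each such $\mathcal I$ the inner value $\min_{x\in X}\max_{i\in\mathcal I}(c^i)^\top x$ admits the standard epigraph reformulation
\begin{equation*}
\min\{\, t \,:\, t \ge (c^i)^\top x\ \forall\, i\in\mathcal I,\ x\in X \,\}.
\end{equation*}
Since $X$ is given by a polynomial-size outer description and we add only $O(1)$ further constraints, this is an LP of polynomial size, solvable in polynomial time. Taking the maximum over the enumerated subsets yields the optimal value of \eqref{eq:BSAP_RO} in polynomial time overall.

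For the hardness part I would reduce from the NP-complete \textsc{Set Cover} problem. Given a ground set $[n]$, subsets $S_1,\dots,S_m\subseteq[n]$ and $k\in\N$, I build an instance of \eqref{eq:BSAP_RO} in dimension $n$ with scenarios $c^i:=\mathbbm{1}_{S_i}\in\{0,1\}^n$ and feasible polyhedron
\begin{equation*}
X \;:=\; \Bigl\{\, x\in\R^n \,:\, x\ge 0,\ \sum_{j=1}^n x_j = 1 \,\Bigr\},
\end{equation*}
the unit simplex, which has an outer description of size $O(n)$. The claim to verify is that the optimal value of \eqref{eq:BSAP_RO} is strictly positive if and only if there exist $k$ indices whose corresponding sets cover $[n]$; the decision version of \textsc{Set Cover} then reduces in polynomial time to a decision version of \eqref{eq:BSAP_RO}.

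The equivalence itself is short. If some $j^*\in[n]$ is not covered by $\bigcup_{i\in\mathcal I}S_i$, then $x=e_{j^*}\in X$ yields $(c^i)^\top x = 0$ for every $i\in\mathcal I$, so the inner min--max equals $0$. Conversely, if $\bigcup_{i\in\mathcal I}S_i=[n]$, then for any $x\in X$ the pigeonhole principle provides some $j^*$ with $x_{j^*}\ge 1/n$ and some $i^*\in\mathcal I$ with $j^*\in S_{i^*}$, so $(c^{i^*})^\top x\ge x_{j^*}\ge 1/n>0$. Monotonicity of the inner min--max in $\mathcal I$ means the constraint $|\mathcal I|\le k$ can be assumed tight, matching the \textsc{Set Cover} format exactly.

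The main conceptual point, rather than a technical obstacle, is the realization that with $X$ polyhedral the inner min--max is itself polynomially tractable, so the hardness of \eqref{eq:BSAP_RO} must stem entirely from the combinatorial choice of $\mathcal I$; this is precisely the flavour of hardness that a selection-style reduction such as the one from \textsc{Set Cover} sketched above is designed to capture. All remaining details (choosing a rational threshold below $1/n$ for the decision version, verifying that the reduction is of polynomial size) are routine bookkeeping.
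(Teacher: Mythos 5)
Your proposal is correct and follows essentially the same route as the paper: enumeration of the $O(m^k)$ subsets combined with a polynomial-size LP (epigraph reformulation) for the tractability part, and a reduction from \textsc{Set Cover} using the unit simplex as $X$ and indicator vectors of the sets as scenarios, with the value being strictly positive if and only if the chosen sets cover the ground set. Your version merely adds a few details the paper leaves implicit, such as the explicit lower bound $1/n$ via pigeonhole in the covered case.
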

\begin{proof}
Note that there are at most $m^k$ possible subsets $\mathcal{I}$ of $[m]$. Hence, if $k$ is a constant value, problem~\eqref{eq:BSAP_RO} can be solved by solving a polynomial number of continuous robust problems with a polynomial number of scenarios. The latter type of problems can be solved in polynomial time.

For the case that $k$ is part of the input, consider any instance of the Set Cover problem, which is known to be NP-hard \cite{gareyjohnson}. Given a set of items $E=\{e_1,\ldots,e_N\}$ and sets $S_i\subset E$ for $i=1,\ldots,M$, we need to decide if there exists a set of indices $\mathcal{I}\subset[M]$ with cardinality at most $K$ such that each item from $E$ is contained in at least one set $S_i$, $i\in\mathcal{I}$.

Set $n=N$, $m=M$, $k=K$, $X=\left\{x\ge 0 \ | \ \mathbbm{1}^\top x=1\right\}$ where $\mathbbm{1}$ is the all-one vector. We define the scenario $c^i\in \R^n$ such that the $j$th entry is equal to one if and only if $e_j \in S_i$ and it is equal to zero otherwise. Note that if a selection of sets $\mathcal I$ does not cover $E$ than an optimal solution of the inner minimization problem in \eqref{eq:BSAP_RO} can be derived by setting $x_i=1$ for a non-covered item $i$ resulting in an optimal value of zero. On the other hand, if $E$ is covered the optimal value must be strictly larger than zero. Hence it holds that there is a solution to problem~\eqref{eq:BSAP_RO} with objective value strictly larger than zero if and only if there exists a set cover with cardinality at most $K$.
\end{proof}

Note that, if $X$ is a polyhedron, we may dualize the inner minimization problem after performing a level set transformation to shift the inner maximization to the constraints. Modeling the selection of the index set $\mathcal I$ by binary variables we can then reformulate the problem as a compact linear problem formulation by using standard linearization techniques for the product of binary and dual variables. On the other hand, if $X$ is a finite set the following theorem holds, which is proved in the appendix.
\begin{theorem}\label{thm:exactIPformulationRO}
If $X$ is finite, problem \eqref{eq:BSAP_RO} can be reformulated as a binary linear program.
\end{theorem}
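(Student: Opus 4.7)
The plan is to exploit the finiteness of $X$ to enumerate its elements and turn the inner $\min_{x\in X}$ and the nested $\max_{i\in\mathcal I}$ into discrete choices encoded by binary variables.

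First I would write $X=\{x^1,\ldots ,x^M\}$ and precompute the constants $\alpha_{ij}:=(c^i)^\top x^j$ for $i\in[m]$, $j\in[M]$. With this, problem~\eqref{eq:BSAP_RO} becomes
\[
\max_{\substack{\mathcal I\subseteq[m]\\ |\mathcal I|\le k}}\min_{j\in[M]}\max_{i\in\mathcal I}\alpha_{ij},
\]
a purely combinatorial max-min-max over finitely many objects.

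Next I would introduce binary variables to model each of the three operations. Let $z_i\in\{0,1\}$ indicate whether $i\in\mathcal I$, with $\sum_{i=1}^m z_i\le k$. To handle the outer max together with the inner min, introduce a continuous epigraph variable $t$ that will be maximized and is constrained to lie below the value of the master problem for each candidate $x^j$, i.e. below $\max_{i\in\mathcal I}\alpha_{ij}$ for every $j$. The key linearization step is to replace each such ``$\le \max$'' constraint by a witness selection: for each $j\in[M]$ introduce binary variables $y_{ij}\in\{0,1\}$ with $\sum_{i=1}^m y_{ij}=1$ and $y_{ij}\le z_i$, and impose $t\le \sum_{i=1}^m \alpha_{ij}\,y_{ij}$. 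Because exactly one $y_{ij}$ is active per $j$ and it must be chosen from the selected scenarios, the right-hand side equals $\alpha_{i(j),j}$ for some $i(j)\in\mathcal I$; since $t$ is being maximized, the LP relaxation of the remaining $y$-choice will naturally pick the $i(j)\in\mathcal I$ that maximizes $\alpha_{ij}$, so $t$ attains exactly $\max_{i\in\mathcal I}\alpha_{ij}$ in optimum. Collecting everything gives the binary linear program
\begin{align*}
\max \ & t\\
\text{s.t.}\ & t \le \sum_{i=1}^m \alpha_{ij}\, y_{ij} & \forall j\in[M],\\
& \sum_{i=1}^m y_{ij} = 1 & \forall j\in[M],\\
& y_{ij}\le z_i & \forall i\in[m],\, j\in[M],\\
& \sum_{i=1}^m z_i \le k,\\
& z_i,\ y_{ij}\in\{0,1\},\ t\in\R.
\end{align*}

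To finish I would argue both directions of equivalence: given any feasible $\mathcal I$ for \eqref{eq:BSAP_RO}, set $z_i=\mathbbm{1}[i\in\mathcal I]$, pick for each $j$ an index $i^\star(j)\in\argmax_{i\in\mathcal I}\alpha_{ij}$, set $y_{i^\star(j),j}=1$ and $t=\min_j \alpha_{i^\star(j),j}$, producing a feasible point of the MIP with the same objective; conversely, from an optimal $(z,y,t)$ read off $\mathcal I=\{i:z_i=1\}$ and check $t\le \min_j \max_{i\in\mathcal I}\alpha_{ij}$. Size is obviously polynomial in $m$, $M$ and $n$. The main obstacle is the last step: arguing cleanly that the witness variables $y_{ij}$ are forced in an optimal solution to realize the true $\max_{i\in\mathcal I}\alpha_{ij}$ and not some weaker lower bound; this is ensured by the combination of $\sum_i y_{ij}=1$, $y_{ij}\le z_i$ and the fact that $t$ is being maximized, so any suboptimal choice of witness could be improved. (The theorem as stated allows a continuous $t$; if a strictly pure binary formulation is required, $t$ can be eliminated by replacing the $M$ upper-bound constraints with explicit big-$M$ constraints over the finitely many values $\alpha_{ij}$, but this is a cosmetic change.)
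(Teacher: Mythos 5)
Your proposal is correct and takes essentially the same approach as the paper: your variables $z_i$, $y_{ij}$ and $t$ play exactly the roles of the paper's $u_i$, $z_i^x$ and $\tau$, and your four constraint families coincide with the paper's formulation. The key justification---that maximizing $t$ forces the witness variables to realize the true maximum over the selected scenarios---is also precisely the argument the paper gives.
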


For the two-stage robust case we again assume we have a given discrete uncertainty set $U=\left\{ d^1,\ldots ,d^m\right\}$. In the column-and-constraint generation method described in Section \ref{sec:preliminaries} we always obtain a lower bound to the original robust problem \eqref{eq:robustProblem} by solving the master problem which is given by \eqref{eq:robust2StageProblem} for the current set of scenarios $\mathcal I\subset [m]$. For a given $k\le m$ the RSRP is then given by
\begin{equation}\label{eq:BSAP_2StageRO}\tag{RSRP-2RO}
\max_{\substack{\mathcal I \subset [m]\\ |\mathcal I |\le k}} \ \min_{\substack{A x + D y^i\le b \ \forall i\in \mathcal I \\ x\in X, y^i\in Y \ \forall i\in \mathcal I}}\ \max_{i\in \mathcal I} \ \left( c^\top x +  (d^i)^\top y^i\right) .
\end{equation}

It follows directly from Theorem \ref{thm:complexityBSAP} that problem \eqref{eq:BSAP_2StageRO} is NP-hard under the same assumptions since we can easily reduce problem \eqref{eq:BSAP_RO} to \eqref{eq:BSAP_2StageRO}.

Note that similar to the single-stage case, \eqref{eq:BSAP_2StageRO} can be reformulated as a binary linear program, which is proved in the appendix. 
\begin{theorem}\label{thm:exactIPformulation2StageRO}
If $X$ is finite, problem \eqref{eq:BSAP_2StageRO} can be reformulated as a binary linear program.
\end{theorem}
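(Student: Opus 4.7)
The plan is to exploit the finiteness of $X$ to reduce \eqref{eq:BSAP_2StageRO} to a purely combinatorial max-min-max over a matrix of precomputed scalars, which then admits a standard big-$M$ linearization, analogous to the single-stage case treated in Theorem~\ref{thm:exactIPformulationRO}.

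First I would enumerate $X=\{x^1,\ldots,x^s\}$ and precompute, for every $(j,i)\in[s]\times[m]$, the scalar
\begin{equation*}
\alpha_{j,i} := c^\top x^j + \min\{(d^i)^\top y : D y \le b - A x^j,\ y\in Y\},
\end{equation*}
setting $\alpha_{j,i}$ to a sufficiently large big-$M$ constant whenever the inner problem is infeasible. The key observation is that for a fixed first-stage $x$ each recourse vector $y^i$ appears only in the constraint and in the objective term belonging to its own scenario, so the inner minimization in \eqref{eq:BSAP_2StageRO} decouples across scenarios. With $x=x^j$ the inner max-min objective therefore equals $\max_{i\in\mathcal I}\alpha_{j,i}$, so \eqref{eq:BSAP_2StageRO} is equivalent to the discrete problem
\begin{equation*}
\max_{\substack{\mathcal I\subset[m]\\|\mathcal I|\le k}}\ \min_{j\in[s]}\ \max_{i\in\mathcal I}\ \alpha_{j,i}.
\end{equation*}

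Next I would linearize this discrete max-min-max. Introduce binary variables $z_i$ encoding $\mathcal I=\{i:z_i=1\}$ with $\sum_{i=1}^m z_i\le k$; for each row $j\in[s]$, binary ``witness'' variables $u_{j,i}$ with $\sum_{i=1}^m u_{j,i}=1$ and $u_{j,i}\le z_i$; and a continuous variable $\theta$ subject to $\theta\le\sum_{i=1}^m\alpha_{j,i}u_{j,i}$ for every $j\in[s]$. The objective is to maximize $\theta$. Equivalence is a short check in both directions: any feasible tuple forces $\sum_i\alpha_{j,i}u_{j,i}\le\max_{i\in\mathcal I}\alpha_{j,i}$ for each $j$, so $\theta\le\min_j\max_{i\in\mathcal I}\alpha_{j,i}$; conversely, given an optimal index set $\mathcal I^*$, setting $u_{j,\cdot}$ to indicate an element of $\argmax_{i\in\mathcal I^*}\alpha_{j,i}$ attains the optimum.

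The main obstacle is ensuring that first-stage solutions $x^j$ with no feasible recourse for some $i\in\mathcal I$ are correctly excluded from the inner minimum of \eqref{eq:BSAP_2StageRO}. Assigning a big-$M$ constant to $\alpha_{j,i}$ for those pairs does the job: the inner $\max$ blows up whenever $x^j$ admits no recourse for the selected $\mathcal I$, so the outer $\min$ skips those rows as long as at least one $x^j$ remains feasible, which holds under the standard well-posedness assumption for two-stage robust problems. Computing the $\alpha_{j,i}$ requires $sm$ deterministic recourse solves, and the resulting formulation is a binary linear program with $O(sm)$ binary variables, one continuous variable $\theta$, and $O(sm)$ linear constraints.
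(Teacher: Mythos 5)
Your proof is correct, but it takes a genuinely different route from the paper's. The paper keeps the recourse variables explicit: it writes one constraint $\tau\le c^\top x+\sum_{i}z_i^y(d^i)^\top y^i$ for \emph{every} pair of a first-stage solution $x\in X$ and a feasible recourse tuple $y=(y^1,\ldots,y^m)$, and uses assignment variables $z^y$ (with $z_i^y\le u_i$, $\sum_i z_i^y=1$) to pick out the worst selected scenario for each such pair --- so the enumeration runs over the product of $X$ with the set of all recourse tuples. You instead first observe that, for fixed $x$ and fixed $\mathcal I$, the inner minimization decouples across scenarios because each $y^i$ appears only in its own term of the max; this collapses the two-stage structure into the precomputed matrix $\alpha_{j,i}$ and reduces \eqref{eq:BSAP_2StageRO} to a purely combinatorial $\max$-$\min$-$\max$, which you then linearize with the same witness-variable device the paper uses (your $u_{j,i}\le z_i$, $\sum_i u_{j,i}=1$ mirrors the paper's $z_i^y\le u_i$, $\sum_i z_i^y=1$). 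The trade-off: your formulation has only $O(|X|\cdot m)$ variables and constraints rather than constraints indexed by all feasible recourse tuples, at the price of $|X|\cdot m$ deterministic recourse solves in preprocessing and a big-$M$ constant plus a well-posedness assumption to handle first-stage solutions with infeasible recourse (a caveat you correctly flag; the paper's version sidesteps it by restricting to feasible tuples in the constraint index set). One cosmetic point: your final linearization is not really a ``big-$M$ linearization'' as announced in your opening sentence --- the only big-$M$ appears in the infeasibility padding of $\alpha_{j,i}$.
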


\section{Data-driven Heuristic}
\label{sec:pearson}

\subsection{Prediction of Relevant Scenarios}

In this section we present a machine-learning-based method to extract information from given training data to find good solutions to problem \eqref{eq:BSAP_RO} (or \eqref{eq:BSAP_2StageRO}). We assume we have a given set of $N$ training samples where each training sample consists of an instance of problem \eqref{eq:robustProblem} (or \eqref{eq:robust2StageProblem}). In case of classical robustness \eqref{eq:robustProblem} each training instance is given by a tuple $(U,X)$ where $U=\left\{ c^1,\ldots ,c^m\right\}$ is a finite uncertainty set and $X$ the feasible set. We denote the set of training samples as $\mathcal T_{RO}=\left\{(U^1,X^1),\ldots , (U^N,X^N) \right\}$. In the two-stage robust case each instance is given by a $7$-tuple $(c,U,X,Y,A,D,b)$ where $c\in\R^n$ is the first-stage cost vector, $U=\{ d^1,\ldots ,d^m\}$ is the uncertainty set, $X,Y$ are the feasible sets of first- and second-stage respectively, $A,D$ are the constraint matrices and $b$ the right-hand-side vector in problem \eqref{eq:deterministicProblem2}. We denote the set of training samples as $\mathcal T_{2RO}=\left\{(c^1,U^1,X^1,Y^1,A^1,D^1,b^1),\ldots , (c^N,U^N,X^N,Y^N,A^N,D^N,b^N) \right\}$. We assume that each training instance is labeled, i.e. for each instance $j\in [N]$ we have a target vector $\lambda^j\in \{ 0,1\}^{U^j}$ where $\lambda_i^j=1$ if scenario $d^i\in U^j$ is a \textit{relevant scenario} and $\lambda_i^j=0$ otherwise. Which scenarios are defined as relevant and how these scenarios are determined is a design choice which has to be made by the user. We discuss several options in Section~\ref{sec:Guidelines}. We denote the set of corresponding label vectors for the training samples in $\mathcal T_{2RO}$ by $\Lambda=\{ \lambda^1,\ldots ,\lambda^N\}$. In the following all results are presented for the two-stage robust problem. Nevertheless all results can easily be adapted for classical robust optimization.

Assume we have a given instance of problem \eqref{eq:robust2StageProblem} with cost vectors $c$ and uncertainty set $U$ which we want to solve. The idea of our approach is to train a Random Forest Classifier (RFC) on the labeled training data in $\mathcal T_{2RO}$ to detect relevant scenarios in an uncertainty set. We then use the trained RFC to predict an output for uncertainty set $U$ which is a target vector $\lambda\in [0,1]^U$ where a large entry of $\lambda$ denotes that the corresponding scenario is more likely a relevant scenario. The latter output vector is then normalized and interpreted as a probability distribution over the scenarios in $U$. We pick $k$ scenarios in $U$ randomly regarding the derived probability distribution which gives us a feasible solution for problem \eqref{eq:BSAP_2StageRO}.

The main question is how to represent an instance from our training set $\mathcal T_{2RO}$ as a feature vector such that an RFC can derive useful predictions. Two important properties our representation should have is that it is \textit{independent of the dimensions} $n,q$ of problem \eqref{eq:deterministicProblem2} and the \textit{number of scenarios} contained in the uncertainty sets. Being independent of the dimension of the problem is important since we want to train RFC on small-sized instances and obtain predictions for larger instances. Being independent of the number of scenarios contained in the uncertainty set is important since the size of the uncertainty set can vary between instances. We achieve both properties by designing features for each scenario which are independent of the dimension of the problem. Using expert knowledge about robust optimization we are able to incorporate important information about the structure of the scenario itself, the relation to the uncertainty and the structure of the underlying problem into the feature design.

\subsection{Feature Design}
In this subsection we present the list of features we designed to obtain a dimension-independent representation of each scenario in our training instances. More precisely, for each instance $j\in [N]$ and each scenario $d^i\in U^j$ we generate the following list of features. We group the features into three categories. The first category contains features which only consider inherent properties of the single scenario $d^i$. The second category contains features which relate the scenario $d^i$ to all other scenarios of the uncertainty set $U^j$. Finally the third category relates the scenario $d^i$ to the underlying problem structure given by the feasible sets $X$,$Y$ and constraint-parameters $A,D,b$.

In the following consider a given scenario $d^i\in\R^q$ contained in the uncertainty set $U^j$ of instance $j\in [N]$ with first-stage cost vector $c$, feasible regions $X,Y$ and constraint parameters $A,D,b$.
\paragraph{Category 1: Single Scenario Features}
\begin{enumerate}
\item Calculate the average value of the absolute values of the entries of $d^i$
\[
f_{1,1}(d^i):=\frac{1}{q}\sum_{l=1}^{q} |d_l^{i}| .
\]
This is motivated by the observation that scenarios with large absolute entries are likely candidates for worst-case scenarios. 

\item Calculate the variance of the entries of $d^i$
\[
f_{1,2}(d^i):=\frac{1}{q}\sum_{l=1}^{q} (d_l^{i}-\bar d^{i})^2
\]
where $\bar d^{i}$ is the average value over all entries of scenario $d^{i}$. A high variance of a scenario can lead to significant variations in objective value of the scenario over different solutions. Hence this is a relevant information about the performance as a worst-case scenario.

\item Calculate the maximum entry of the absolute values of the scenario
\[
f_{1,3}(d^i):=\max_{l=1,\ldots ,q} |d_l^{i}| .
\]
Scenarios with large maximum entries lie close to the boundary of the box containing all scenarios of an uncertainty set. Hence they are more likely to be considered as worst-case scenarios.
\end{enumerate}

\paragraph{Category 2: Scenario Features Related to the Uncertainty Set}
\begin{enumerate}
\item Calculate the distance to the center point of the uncertainty set
\[
f_{2,1}(d^i):=\|d^{i}-\bar d\| 
\]
where \[\bar d = \frac{1}{|U^j|} \sum_{d\in U^j} d\] is the center of the uncertainty set and $\|\cdot\|$ the Euclidean norm. The distance to the center point contains relevant information since it contains information about the geometrical location of the scenario in the uncertainty set, namely if it is located deep in convex hull of the uncertainty set or more on the boundary.
\item Calculate the average quadratic distance to center point of the uncertainty set
\[
f_{2,2}(d^i):=\|d^{i}-\bar d\|^2 .
\]
The motivation is the same as for feature $f_{2,1}$, but here large distances get reinforced.

\item Calculate the average distance to the closest $\kappa$ scenarios
\[
f_{2,3}(d^i):=\frac{1}{\kappa}\sum_{t\in D_i^\kappa} \|d^{i}-d^{t}\|
\]
where $D_{i}^\kappa$ contains the indices of the $\kappa$ closest scenarios from $U^j\setminus\{d^i\}$ to scenario $d^{i}$. This feature contains information about the density of the region in which the scenario is located. If it lies in a dense region of the uncertainty set the average distance to the closest scenarios is small while it is large if $d^i$ lies in a region with low density. Note that this features also provides a score for considering the scenario as an outlier. Parameter $\kappa$ is an input to this feature.

\item Calculate the scalar product with the center of the uncertainty set 
\[
f_{2,4}(d^i):=\bar d^\top d^{i}
\]
where $\bar d$ is the center of the uncertainty set $U^j$. The scalar product does not only contain information about the norm of $d^i$ and $\bar d$ but also about the angle between both vectors and hence about the location of $d^i$ related to the uncertainty set.
\end{enumerate}

\paragraph{Category 3: Scenario Features Related to the Problem Structure}
\begin{enumerate}
\item Calculate the optimal value of the deterministic problem \eqref{eq:deterministicProblem2} for scenario $d^i$ (denoted by opt$(d^i)$)
\[
f_{3,1}(d^i):= \text{opt}(d^i).
\]
A larger deterministic optimal value points to a scenario which is more likely relevant since for all first-stage and second-stage solutions this scenario provides a large objective value. Note that this feature relates the scenario to the structure of the problem and the first-stage solutions. If for example a certain variable is never used by all feasible solutions then the corresponding scenario-entry does not influence this feature, in contrast to the features in Category $1$ and $2$. 

\item Calculate the objective value of a deterministic solution for scenario $d^i$ for the average scenario
\[
f_{3,2}(d^i):=\bar d^\top y^i
\]
where $\bar d$ is the average scenario of $U^j$ and $y^i$ is an optimal second-stage solution of \eqref{eq:deterministicProblem2} for scenario $d^i$. Since $\bar d$ contains information about all scenarios in $U^j$, this feature connects the deterministic solutions to all other scenarios, i.e. the structure of the problem is connected to the uncertainty set.

\item Define a distance measure between two scenarios, which takes into account the underlying problem structure. To this end let $(x^i,y^i)$ be an optimal solution of the deterministic problem with first-stage costs $c$ and second-stage costs $d^i$. We define a distance measure between scenario $d^i$ and $d^j$ by
\[
\delta(d^i,d^j):= \frac{1}{2}\left( (d^i)^\top y^i - (d^i)^\top y^j \right) + \frac{1}{2} \left( (d^j)^\top y^j - (d^j)^\top y^i \right).
\]
The distance is small if the optimal solution of scenario $d^i$ has an objective value close to the optimal one in scenario $d^j$ and vice versa. Hence a small distance indicates that both solutions perform similarly in their corresponding scenarios. In contrast to the Euclidean distance we use for the features in Category $1$ and $2$ this distance includes the underlying problem structure of the problem. We can now generate features $f_{2,1},f_{2,2},f_{2,3}$ where we replace the Euclidean distance by distance $\delta$. We denote these features by $f_{3,3},f_{3,4},f_{3,5}$.

\item Calculate the optimal first-stage solution if all second-stage cost-scenarios would be zero, and measure its performance in each scenario of $U^j$. More precisely, calculate an optimal first-stage solution $x^*$ of the problem 
\[
\begin{aligned}
\min \ & c^\top x \\ 
s.t. \quad & Ax + D y  \le b \\
& x\in X, y\in Y 
\end{aligned}
\]
and define the feature as
\[
f_{3,6}(d^i):= (d^i)^\top x^* .
\]
This feature connects information about the first-stage solution to the scenarios. Note that this approach is only possible if $n=q$; we give examples for this assumption in the experimental section.

\item Identify the $\alpha$ most important second-stage variables by the following procedure. Calculate the average scenario $\bar d$ (as above) and solve the deterministic problem in the average scenario after removing second-stage variable $y_t$, i.e. solve
\begin{equation*}
\begin{aligned}
\min \ & c^\top x + \bar d^\top y \\ 
s.t. \quad & Ax + D y  \le b \\
& y_t=0 \\
& x\in X, y\in Y 
\end{aligned}
\end{equation*}
and denote its optimal value as opt$(t)$. If the resulting problem is infeasible we assign a big-M value as optimal value. Note that if the deterministic problem is defined on a graph, the latter procedure would result in removing the corresponding edge from the graph.  Afterwards we calculate the $\alpha$ variable indices $t_1,\ldots , t_\alpha$ with the largest optimal values when removing the corresponding variable. These variables are considered as the most important ones since removing them leads to the largest increase in objective value. We define a feature for scenario $d^i$ as the absolute value of the scenario entry of the corresponding variable, i.e.
\[
f_{3,7}(d^i):=|d_{t_\alpha}^i|
\]
All these features consider the relation between scenario $d^i$ and the underlying problem structure and the first-stage solutions. Note that if a variable is important and the corresponding entry of the scenario has a large absolute value, then this scenario is likely a more relevant scenario for the worst-case. Parameter $\alpha$ is an input to this feature.

\end{enumerate}
Note that the features in Category $3$ lead to heavier computations since they involve solving the deterministic problem \eqref{eq:deterministicProblem2} a large amount of times.

We calculate all features from Categories 1-3 for all scenarios $d^i$ in all uncertainty sets $U^1,\ldots ,U^N$ in our training set. The corresponding features are concatenated forming one feature vector for each scenario. All features from Categories 1-3 are normalized to values in $[0,1]$ by applying min-max normalization, i.e. for each feature $f$ we calculate the minimum value $f_{min}$ and the maximum value $f_{max}$ and define the normalized feature for scenario $d^i$ as 
\[
\bar f(d^i)=\frac{f(d^i)-f_{min}}{f_{max}-f_{min}}.
\]
Then every feature vector gets assigned the corresponding label in $\Lambda$. This generated data can be used to train an RFC.

\subsection{Heuristic Method}

We combine all pre-described concepts into one heuristic algorithm which is shown in Algorithm \ref{alg:heuristic}. The main idea is to go through all scenarios in our training set $\mathcal T_{2RO}$ and calculate all features described in the previous section. Then the corresponding feature vectors are used to train an RFC. If we want to solve a new instance of \eqref{eq:deterministicProblem2} we generate the same feature vectors for all scenarios in the given uncertainty set $U$ and let the trained RFC predict a value in $[0,1]$ for each scenario. These values are normalized to obtain a probability distribution and afterwards we pick $k$ scenarios from $U$ randomly following the given probability distribution. In the normalization, we used squared weights to give increased probability for scenarios where the prediction is close to 1.

\begin{algorithm}\caption{(Data-driven Heuristic)}\label{alg:heuristic}
\begin{algorithmic}
\State {\bfseries Input:} $k,\alpha\in \mathbb N$, instance to solve $(c,U,X,Y,A,D,b)$, training set $\mathcal T_{2RO}$ with corresponding labels $\Lambda$ 
	\State {\bfseries Output:} $k$ scenarios in $U$
	\State set $\mathcal V=\{ \}$ \Comment{Training}
	\For{$j=1,\ldots ,N$ }
	\For{$d\in U^j$}
	\State calculate feature vector
	\[
	f(d):=\left(f_{11}(d),\ldots ,f_{3,6+\alpha}(d)\right)
	\]
	\State $\mathcal V = \mathcal V \cup \{ (f(d), \Lambda(d)) \}$
	\EndFor
	\EndFor
	\State train RFC on $\mathcal V$
	\For{$d\in U$} \Comment{Prediction}
	\State calculate feature vector $f(d)$ as above
	\State calculate prediction $w_d\in [0,1]$ of RFC for $f(d)$
	\EndFor
	\State Define $\tilde w_d = \frac{w_d^2}{\sum_{d\in U} w_d^2}$ for every $d\in U$.
	\State Sample randomly $k$ scenarios without replacement from $U$ following probability distribution $\tilde w$.
	\State \textbf{Return:} $k$ drawn scenarios
\end{algorithmic}
\end{algorithm}

\subsection{Guidelines for Application}\label{sec:Guidelines}
The performance of the data-driven heuristic presented in Algorithm \ref{alg:heuristic} heavily depends on the quality of the training data which is accessible. While for practical applications it would be desirable to build up a data base with already solved and labeled instances of the given problem which can be used by our heuristic to solve future instances of the same problem more efficiently, often such a database does not exist. Hence in the following we discuss how to generate and label instances and for which type of applications our procedure is useful.
 
\paragraph{Generation of Training Instances}
If no data basis of problem instances is available, generating random instances is necessary. To this end for each training instance the cost vectors and the set of scenarios have to be generated. Ideally the generated instances should be close to the real instances of the considered problem and hence problem specific generation methods should be derived by the user. For example, when considering disaster management problems where an earthquake or airplane crash can appear in a certain location, one could simulate several incidents in random locations to generate cost scenarios; see e.g. \cite{goerigk2015two}. In the case of uncertain demands, which often appear in inventory, facility location or vehicle routing problems, possible demand scenarios can be drawn from a user defined distribution with support contained in the range which is observed in historic scenarios. Also more sophisticated statistically sample methods could be applied using empirical knowledge about the underlying distribution. If possible expert knowledge about the application should be incorporated into the generation process.

\paragraph{Labeling Scenarios as Relevant} 
Deciding which of the scenarios in a training instance are labeled as relevant is a crucial step when generating training data since data-driven approaches try to detect structural properties which indicate relevance. In this work the aim is to find scenarios which strongly improve the lower bound given by the master problem used in the CCG. Hence the optimal set of relevant scenarios could be determined by calculating the smallest $k\le m$ such that the objective value of problem \eqref{eq:BSAP_2StageRO} does not change anymore and afterwards choose the scenarios selected by the optimal solution $\mathcal I^*$ as relevant scenarios. Note that for each training point, i.e. each instance, we have to run the latter procedure which cannot be done efficiently. 

A second variant is to solve each training instance by the CCG and collect all scenarios which are generated by the adversarial problem during this method. 
It is not necessary to run the CCG until convergence is reached and \eqref{eq:BSAP_2StageRO} is solved to proven optimality; instead, the process can be stopped after a fixed limit on the computation time and the scenarios that have been generated so far are used.
After the algorithm has terminated we check for each collected scenario if the objective value decreases after removing it, in which case we label it as relevant. All other scenarios are labeled as irrelevant. Note that the latter step is required since the set of scenarios generated by the CCG can contain redundant scenarios.

Alternatively we can define a scenario to be relevant if removing it from the uncertainty set leads to a decrease of the optimal value of problem \eqref{eq:robust2StageProblem}.

Note that all the latter approaches are computationally very demanding since they involve solving a set of problems which we aim to solve more efficiently by Algorithm \ref{alg:heuristic}. However our approach can be trained on small-dimensional instances which can still be solved without being supported by our method. Then it can be used to predict relevant scenarios for high-dimensional instances. Our experiments show that this improves the performance of the solution method. 

Since the generation of the data and feature vectors can be computationally costly such data-driven methods are tailored for applications where we have a long preparation phase, which can be used to build up training data, while in the execution phase fast solutions are desired. The latter properties often appear in disaster management problems \cite{goerigk2015two}. 

Note that in all labeling approaches above infeasible or unbounded instances are detected during the solution process and can be removed.

\section{Experiments}\label{sec:experiments}

\subsection{Setup}

In this section we perform several experiments to evaluate the efficiency and effectiveness of the proposed data-driven scenario prediction method introduced in Section \ref{sec:pearson}. To this end, we consider two types of robust two-stage problems with objective uncertainty. Both problems have in common that, given a graph $G=(V,E)$, in the first-stage problem we need to decide which edges of the graph we want to buy. Once this decision is made, we obtain the costs of each edge and afterwards in the second-stage we have to find a feasible solution with minimal costs for a given network problem in the subgraph that was bought in the first stage. We assume we have $m$ scenarios in the discrete uncertainty set $U = \{d^1,\ldots,d^m \}$ that determine the second-stage costs. Our problem~\eqref{eq:robust2StageProblem} can then be modeled as follows:
\begin{align}
\min\ & \sum_{e\in E} c_e x_e + z \\
\text{s.t. } & z \ge  \sum_{e\in E} d^i_e y^i_e & \forall i\in[m] \\
& y^i_e \le x_e & \forall e \in E, i\in[m] \label{con1}\\
& x_e \in\{0,1\} & \forall e \in E \\
& y^i \in Y & \forall i\in[m]
\end{align}
with $Y$ denoting the set of feasible solutions for a given network problem. In the following experiments, we consider traveling salesperson problems (TSP) and shortest path problems (SP). Variable $x_e$ decides whether to buy edge $e$ or not, i.e., for known costs $c_e$, we can make an edge available for the second-stage problem. Due to constraints~\eqref{con1}, only edges bought this way may be used in the second-stage. 

For TSP, we use the classical Miller-Tucker-Zemlin formulation, see \cite{miller1960integer}.
We note that more effective formulations have been developed to solve TSP, but as all scenario generation methods use the same formulation, this is not the focus of our experiment. We construct complete graphs with numbers of nodes in $\{6,7,8,9\}$. 

To model shortest path problems, we use standard flow constraints on each node.
We generate layered graphs, where each node of one layer is connected to all nodes of the next layer. Additionally, the source node $s$ is connected to each node of the first layer, while all nodes of the last layer are connected to the sink node $t$. Each layer has a width of five nodes, while we vary the number of layers in $\{5,6,7,8\}$.

For the two smaller problem sizes (6 and 7 nodes for TSP, 5 and 6 layers for SP), we generate 120 problem instances. Of these, 100 are used for training, and 20 are used for testing. For the two larger problem sizes (8 and 9 nodes for TSP, 7 and 8 layers for SP), we only generate 20 instances for testing. Each instance has $m=500$ scenarios.
For TSP the first-stage costs $c$ are sampled uniformly iid from $[4,6]^E$, while for the second-stage costs, we sample an instance-specific number $K$ uniformly from $\{3,\ldots,8\}$ to create a multi-modal distribution. For each $k\in [K]$, a midpoint vector $\mu^k \in [25,75]^E$ and a maximum deviation vector $\delta^k \in [0.1,0.5]^E$ are sampled uniformly. For each scenario $i$ that we sample, we choose one $k \in [K]$ randomly and choose the cost of each edge $e$ uniformly in $[(1-\delta^k_e)\mu^k_e, (1+\delta^k_e)\mu^k_e]$. For SP the costs are generated in the same way, except first-stage costs $c$, which are sampled uniformly iid from $[5,15]^E$. The choice for the intervals of the first-stage costs are motivated by our observations that making the intervals too large can lead to trivial solutions where the edges bought in the first stage just form one feasible second-stage solution. Additionally, the size of the boundary of the intervals had to be adjusted since for too small first-stage costs it can be beneficial to buy all edges of the graph, while for too large values again only one feasible second-stage solution is bought. 

We apply the data-driven heuristic described as Algorithm~\ref{alg:heuristic} in Section~\ref{sec:pearson}, in the following denoted as \textit{DDH}. We apply a Random Forest Classifier from the sklearn Python library using 100 trees with a maximum depth of 5. The resulting predictions are then used to create a random distribution over the scenarios. To label the training data, we run the CCG method with a time limit of 60 seconds per instance and afterwards test if redundant scenarios can be removed. Note that training our method can be done in advance, well before an optimization problem needs to be solved. In the following, labeling and training times are not included in the time limits.

As benchmark methods we use the following simple heuristics to create alternative probability distributions: In the method denoted by \textit{Random}, each scenario has equal probability. In the method denoted by \textit{Maxsum}, we calculate the weight of a scenario by its sum of cost values $\sum_{e\in E} d^i_e$ and then normalize in the same way as in Algorithm~\ref{alg:heuristic}, i.e., scenarios with a higher sum are more likely to be relevant than scenarios with a smaller sum. Note that our analysis of feature importance showed that the latter benchmark (equivalent to feature $f_{1,1}$) is below the features with the largest impact on the prediction decision, hence it is an easy but very competitive benchmark.

For each type of problem, we conduct two experiments. In the first experiment, the purpose is to determine the strength of the problem formulation when we only use the top $k$ scenarios with highest priority, where we vary $k$ in $\{1,3,\ldots,15\}$. Recall that the objective value of the resulting two-stage robust optimization problem is a lower bound on the actual objective value when using all scenarios. We run each algorithm to determine the $k$ most relevant scenarios, and solve the resulting problem formulation with a 60 second time limit. If the problem cannot be solved to optimality, the best lower bound is reported. We repeat this process 10 times for each instance and method due to the inherent randomness of the algorithm.
In the second experiment, we run the CCG method using the sampled 5 most relevant scenarios as starting scenarios. For this experiment, we use a 600 second time limit and repeat each combination of instance and method 5 times (we use a smaller number of repetitions due to the increased computational effort). While the first experiment has the purpose to show the quality of the derived bound of the predicted scenarios, the second experiment also considers the subsequent performance of the algorithm. 

All experiments were conducted on a virtual server with Intel Xeon Gold 5220 CPU running at 2.20GHz. As a mixed-integer programming solver, we used Gurobi version 9.0.3. For better comparability, each run was restricted to one thread.

\subsection{Results for Traveling Salesperson Problems}

We first discuss the quality of prediction that DDH achieves. To this end, we use the 200 labeled instances of size 6 and 7, and use 90\% of each for training and 10\% of each for testing. In Figure~\ref{fig:tsproc}, we show the corresponding ROC curves. As expected, there is a small decline in performance when considering testing data instead of training data. Still, the AUC values of $0.80$ and $0.75$. respectively, indicate that the RFC method is indeed able to identify relevant scenarios.

\begin{figure}[htbp]
\begin{center}
\subfigure[Training data, $\text{AUC}=0.80$\label{fig:tsproc1}]{\includegraphics[width=0.37\textwidth]{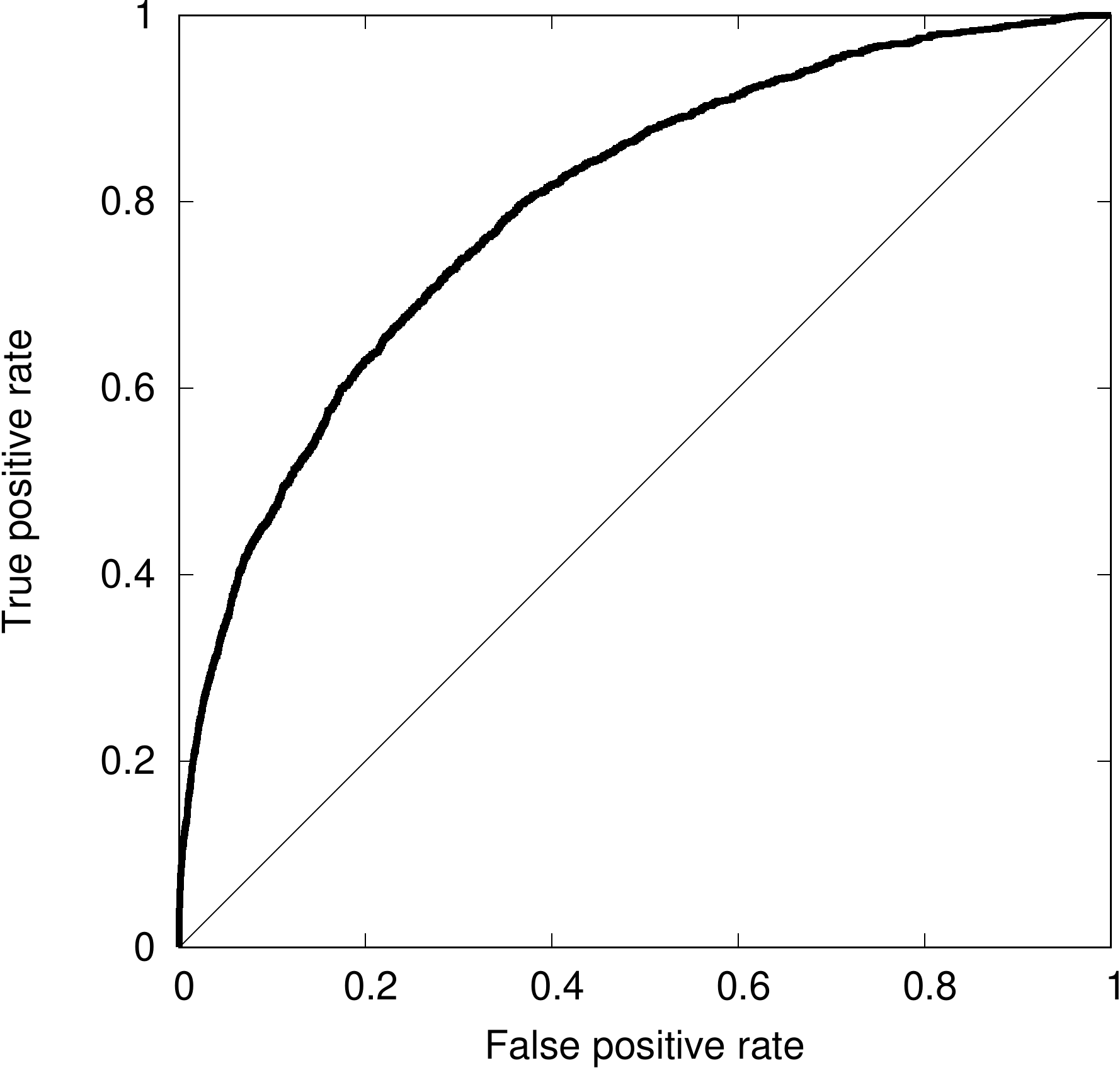}}%
\hspace{1cm}\subfigure[Test data, $\text{AUC}=0.75$\label{fig:tsproc2}]{\includegraphics[width=0.37\textwidth]{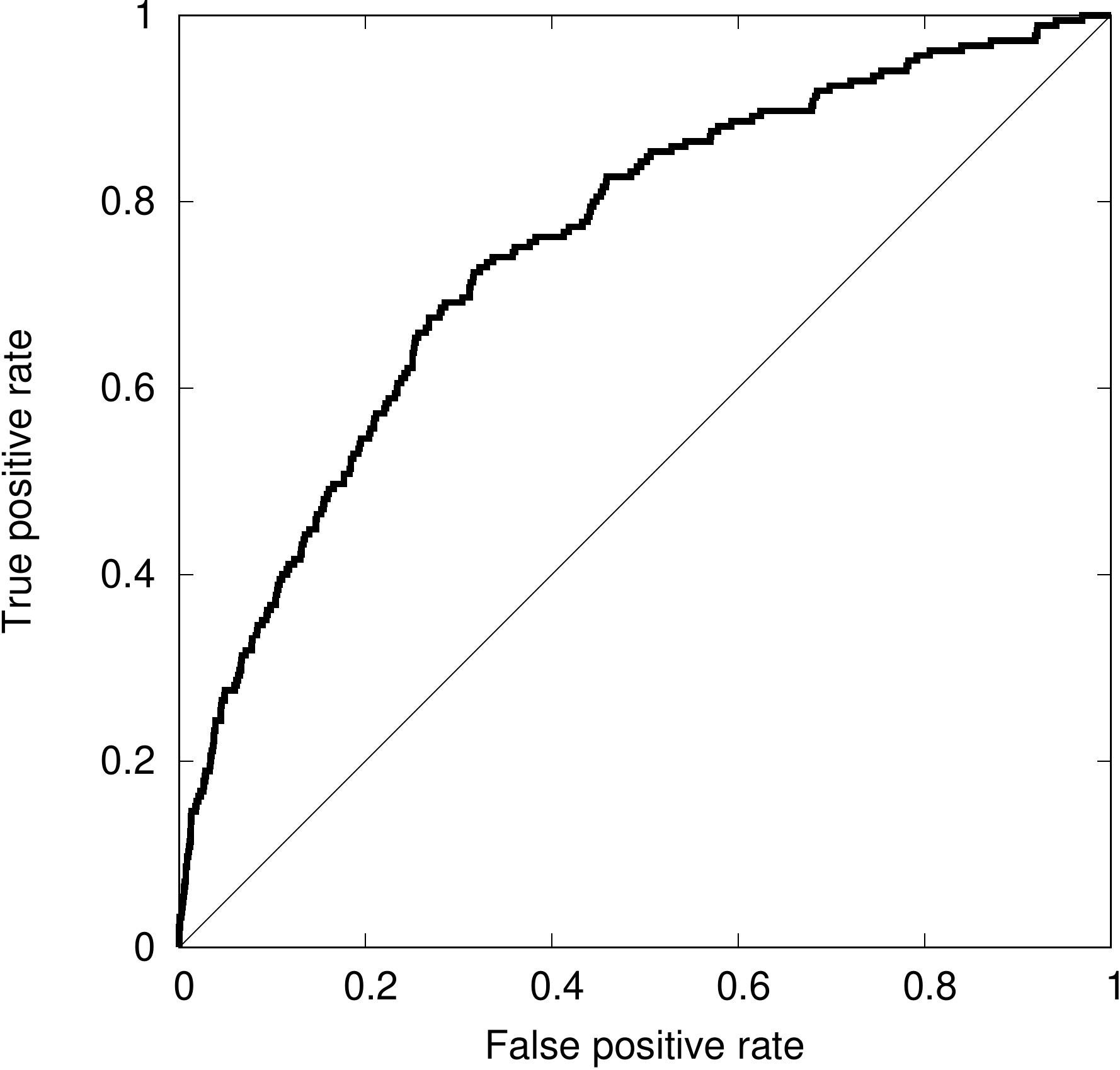}}
\end{center}
\caption{ROCs for TSP prediction.\label{fig:tsproc}}
\end{figure}

For the remaining experiments, we use all 200 labeled instances of size 6 and 7 for training and measure the performance of the predicted scenarios also for the larger-dimensional instances which were not used for training. We first compare the lower bounds using $k$ starting scenarios for $k\in\{1,3,\ldots,15\}$ in Figure~\ref{fig:tspgap}. We normalize the lower bounds so that the lower bound found by Random with one starting scenario is equal to one. We note that results are consistent for all problem sizes. In each case, Random results in the smallest lower bounds, followed by Maxsum, while DDH results in the largest bounds. In particular, we find that the DDH method is indeed scalable to problem sizes that are larger than the problem sizes used for training. However as expected the improvement decreases with the problem size.

\begin{figure}[htbp]
\begin{center}
\subfigure[6 nodes\label{fig:tsp6bound}]{\includegraphics[width=0.48\textwidth]{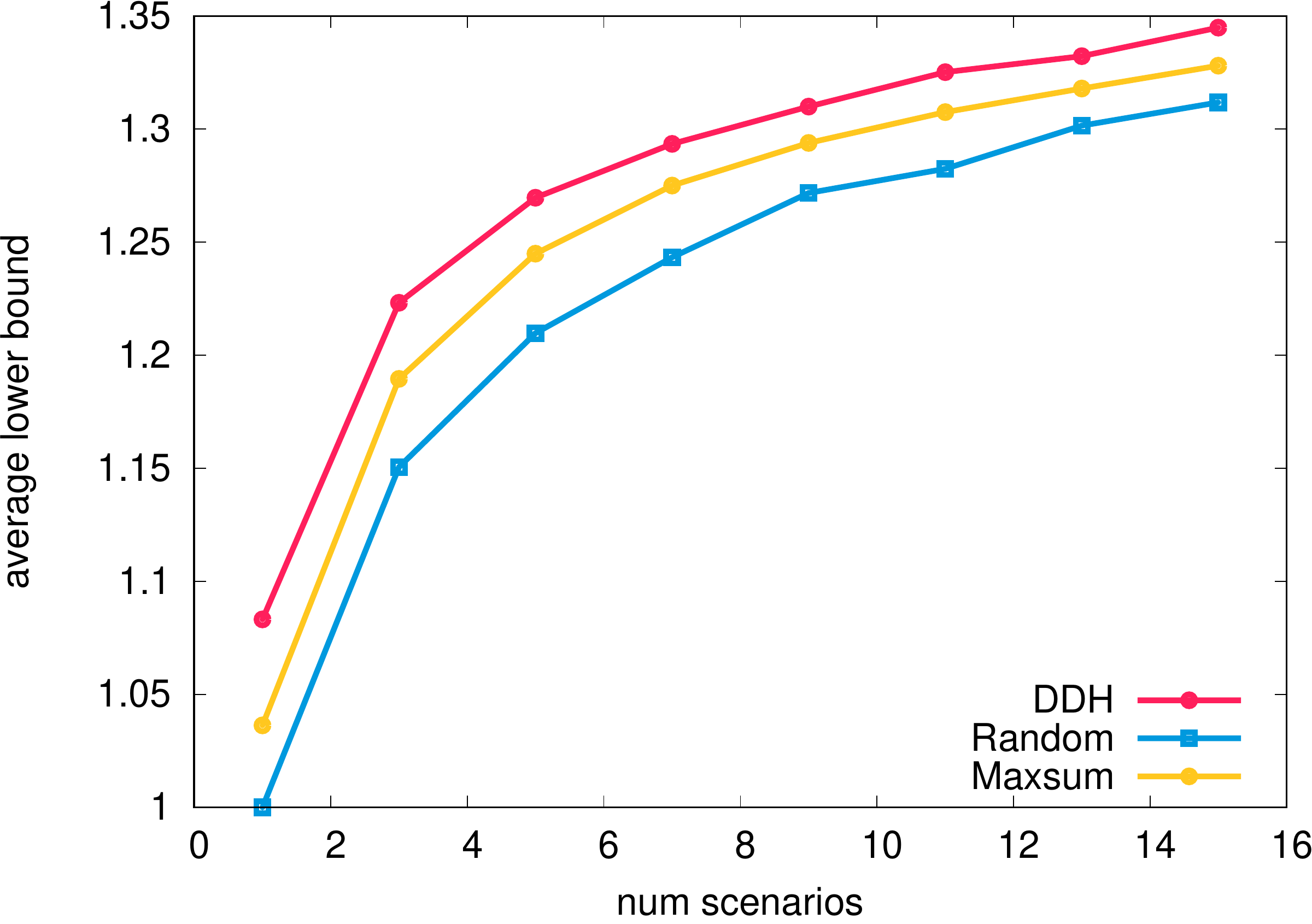}}%
\hfill
\subfigure[7 nodes\label{fig:tsp7bound}]{\includegraphics[width=0.48\textwidth]{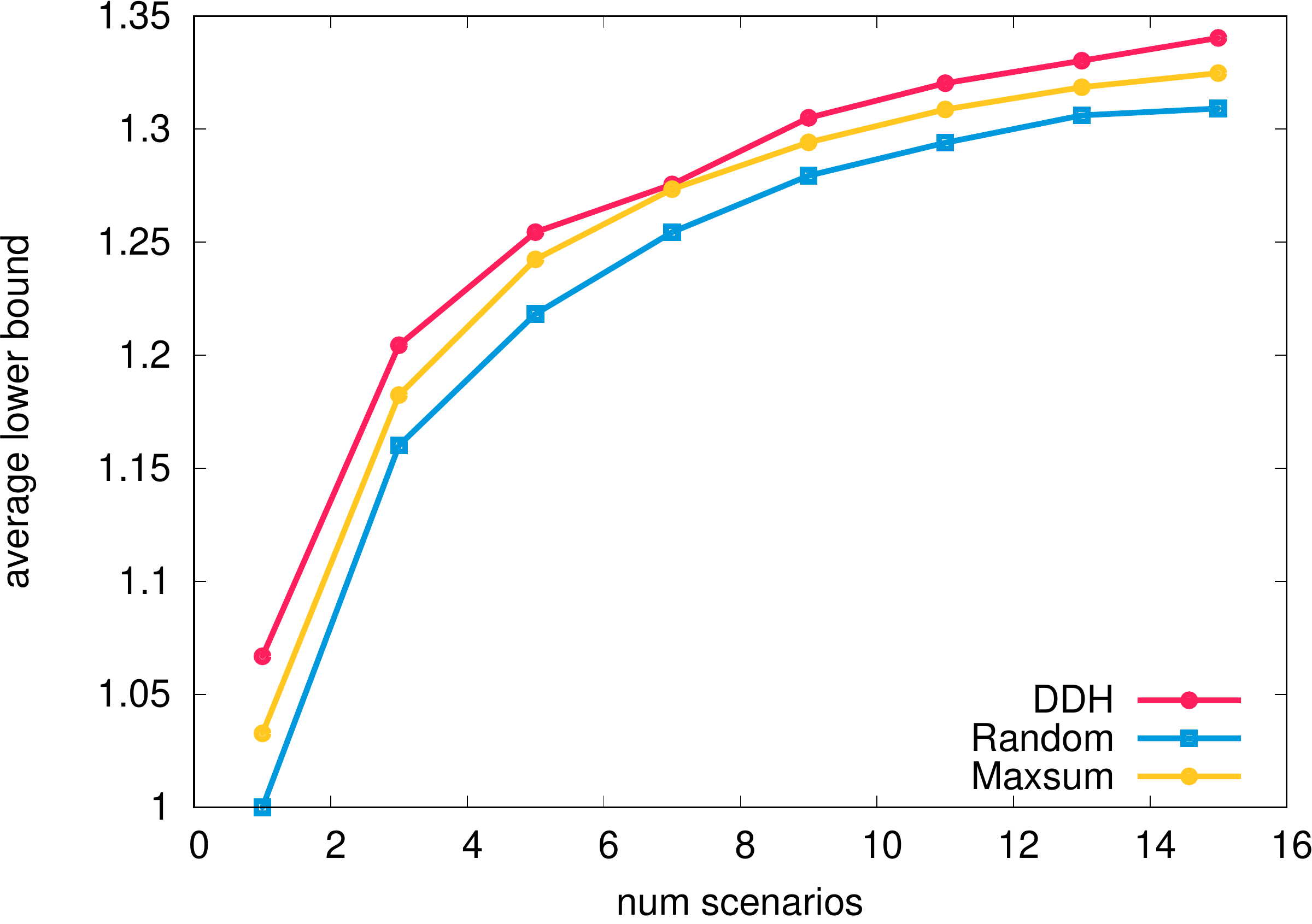}}
\subfigure[8 nodes\label{fig:tsp8bound}]{\includegraphics[width=0.48\textwidth]{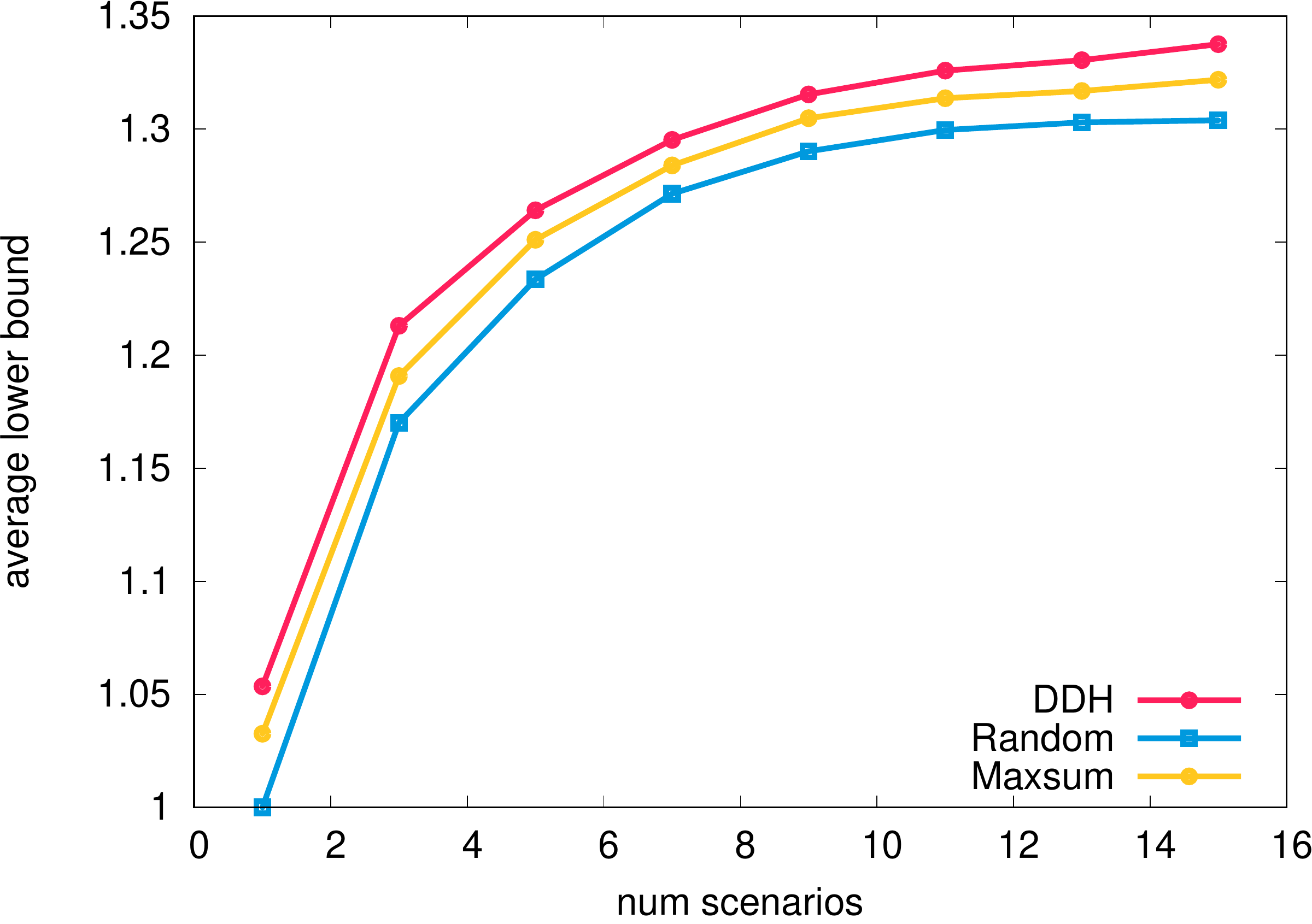}}%
\hfill
\subfigure[9 nodes\label{fig:tsp9bound}]{\includegraphics[width=0.48\textwidth]{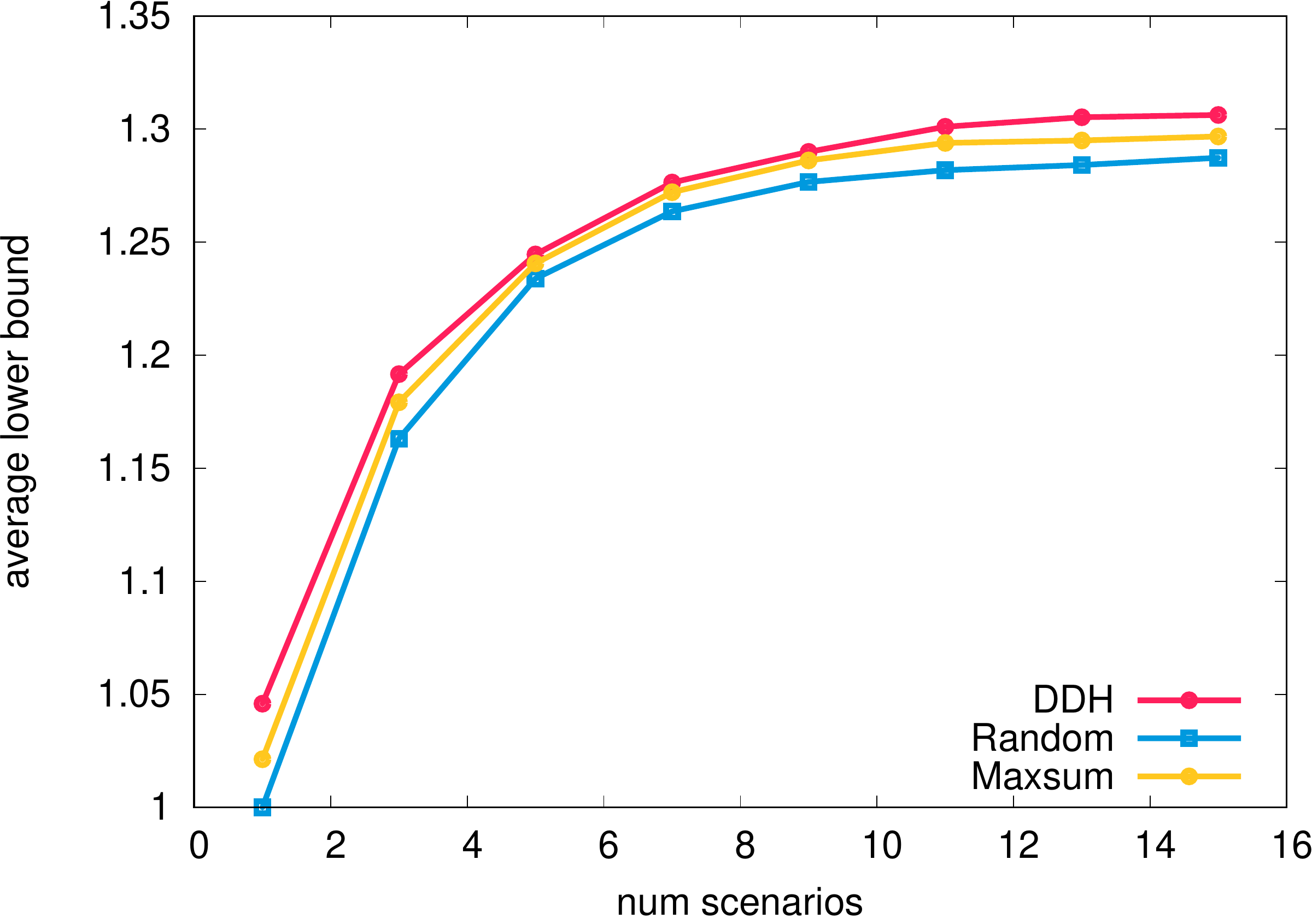}}
\end{center}
\caption{Average lower bounds depending on the number of starting scenarios for TSP. Values are normalized so that the lower bound of Random with one scenarios is equal to one.\label{fig:tspgap}}
\end{figure}

In the second experiment, we use 5 starting scenarios to run the CCG method. In Table~\ref{tab:tspexp2}, we report the lower bound found at the end of the time limit. We find results that are consistent with the first experiments; that is, a stronger lower bound at the start of the method also results in a better bound at the time limit. Indeed, the average lower bound for DDH is stronger than that of Random or Maxsum for each problem size.

\begin{table}[htbp]
\begin{center}
\begin{tabular}{r|rrr|rrr|rrr}
& \multicolumn{3}{c|}{Random} & \multicolumn{3}{c|}{Maxsum} & \multicolumn{3}{c}{DDH} \\
Nodes & Q1 & Avg & Q3 & Q1 & Avg & Q3 & Q1 & Avg & Q3 \\
\hline
6 & 335.3 & 342.0 & 348.9 & 333.0 & 342.1 & \textbf{350.8} & \textbf{336.0} & \textbf{342.3} & 350.1 \\
7 & 368.6 & 375.9 & 384.0 & \textbf{371.6} & 377.0 & 383.3 & 371.1 & \textbf{378.6} & \textbf{387.0} \\
8 & 405.2 & 411.8 & 416.2 & 406.9 & 412.2 & 417.1 & \textbf{408.6} & \textbf{415.6} & \textbf{421.0} \\
9 & 433.9 & 438.9 & 443.7 & 434.6 & 440.1 & 445.0 & \textbf{437.9} & \textbf{443.5} & \textbf{447.8}
\end{tabular}
\end{center}
\caption{Lower bounds after timelimit for TSP. Q1 is first quartile, Avg is average, Q3 is third quartile of data. Best results in each category is bold.}\label{tab:tspexp2}
\end{table}

In the Appendix we show the feature importance returned by the decision tree classifier. The results indicate that for TSP the features $f_{1,1}$ (average scenario entry), $f_{2,4}$ (scalar product of scenario and center scenario) and $f_{3,1}$ (deterministic optimal value for scenario) have the largest impact on the prediction performance. This shows that all three categories are useful for the prediction.

\subsection{Results for Shortest Path Problems}

In the same way as for TSP, we first show the prediction performance of the RFC on data with 5 and 6 layers, for which labels exist. In Figure~\ref{fig:sproc}, we show the ROC curves for training and test data, respectively. While the AUC values ($0.76$ and $0.68$) are slightly worse than before, it is still possible to predict relevant scenarios with some success.

\begin{figure}[htbp]
\begin{center}
\subfigure[Training data, $\text{AUC}=0.76$\label{fig:sproc1}]{\includegraphics[width=0.37\textwidth]{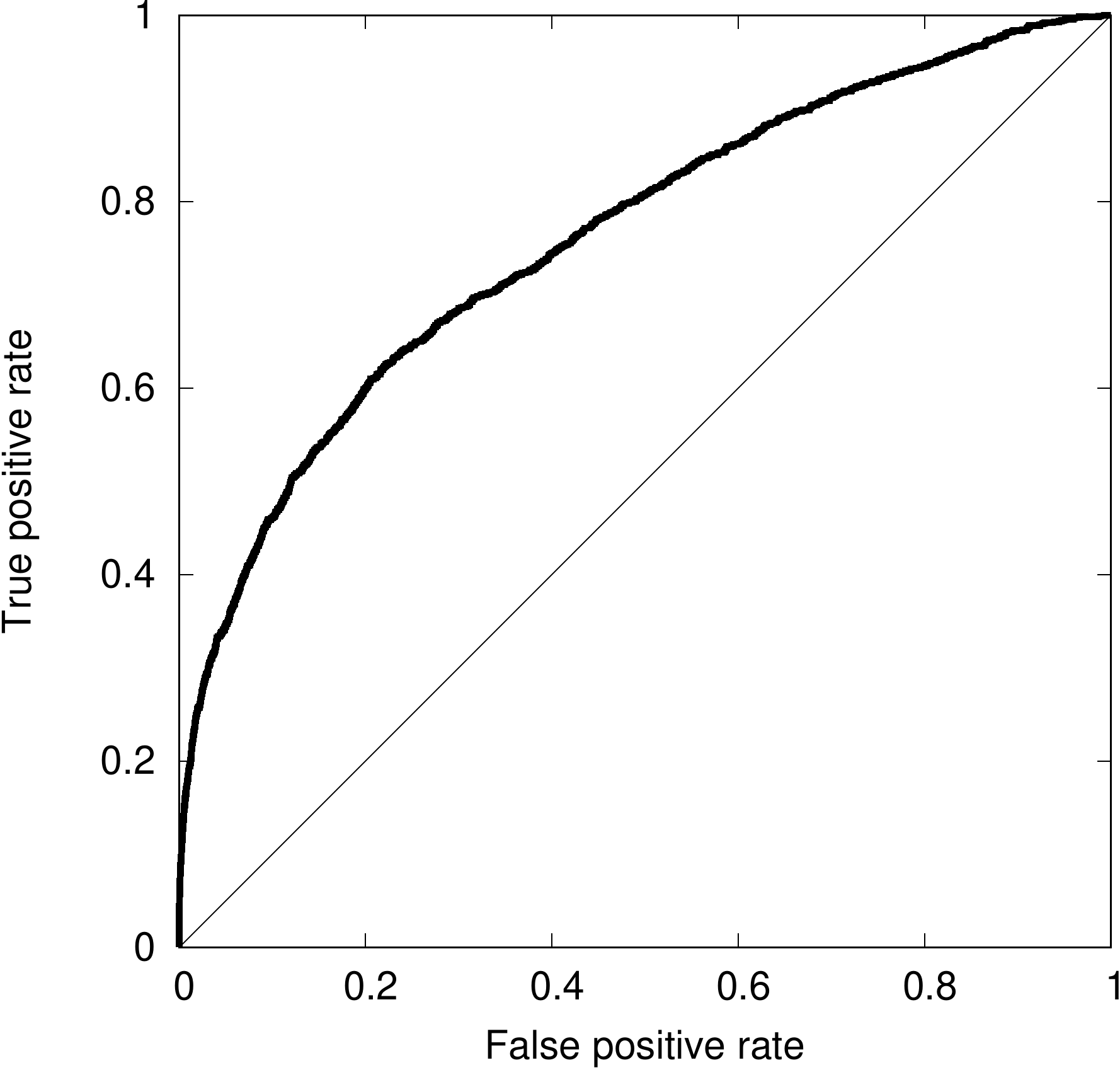}}%
\hspace{1cm}\subfigure[Test data, $\text{AUC}=0.68$\label{fig:sproc2}]{\includegraphics[width=0.37\textwidth]{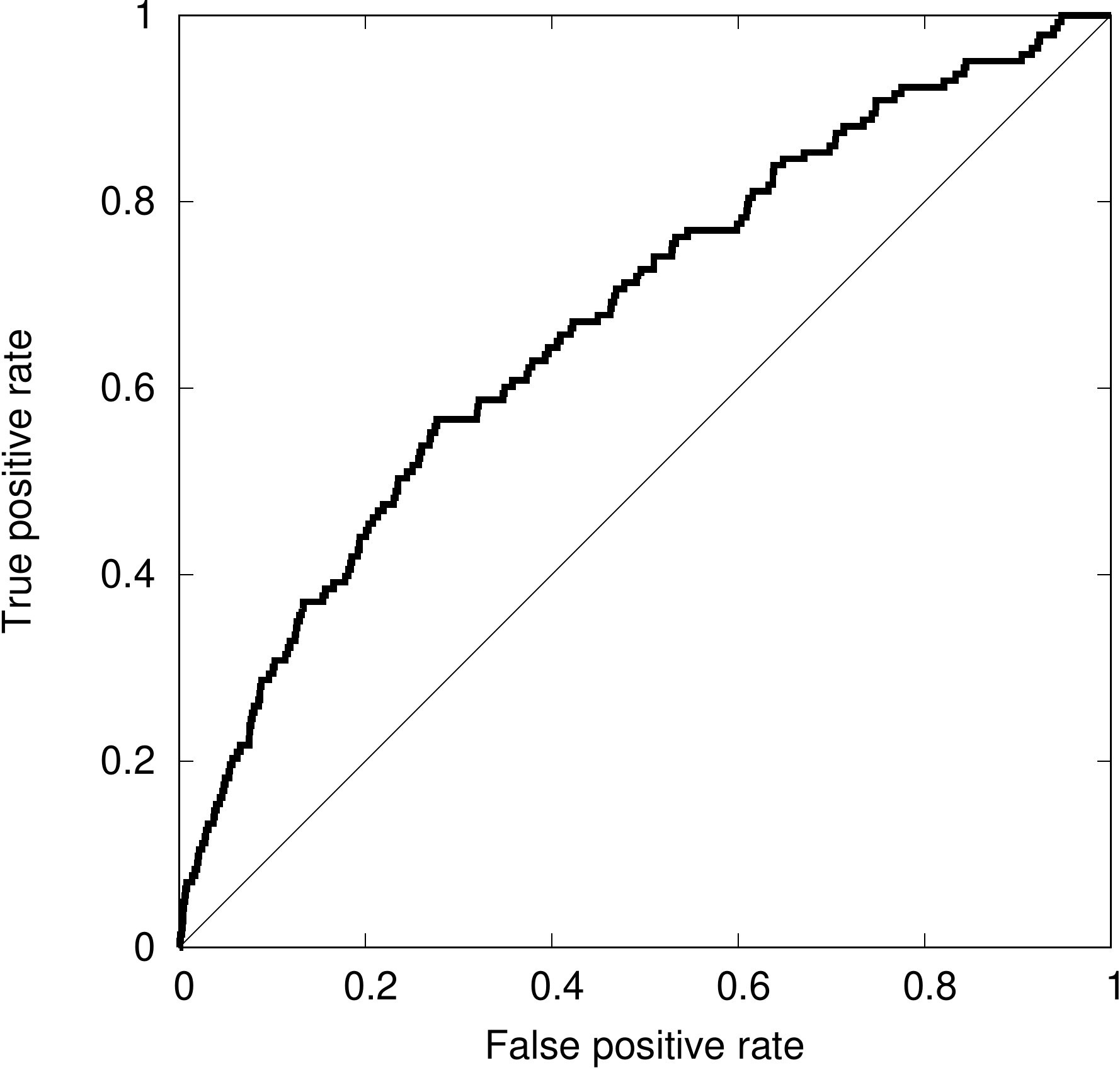}}
\end{center}
\caption{ROCs for SP prediction.\label{fig:sproc}}
\end{figure}

We show the results of the first experiment, i.e.,  the quality of the lower bounds for varying numbers of scenarios, in Figure~\ref{fig:spgap}. Notably, there is increased overlap between curves, in particular for Random and Maxsum. While for 5 and 6 layers, the performance of DDH is stronger than for the other methods, this advantage still exists for 7 and 8 layers, albeit less pronounced. Again, we find that it is possible to scale the prediction capabilities to larger problem sizes.
 
\begin{figure}[htbp]
\begin{center}
\subfigure[5 layers\label{fig:sp5bound}]{\includegraphics[width=0.48\textwidth]{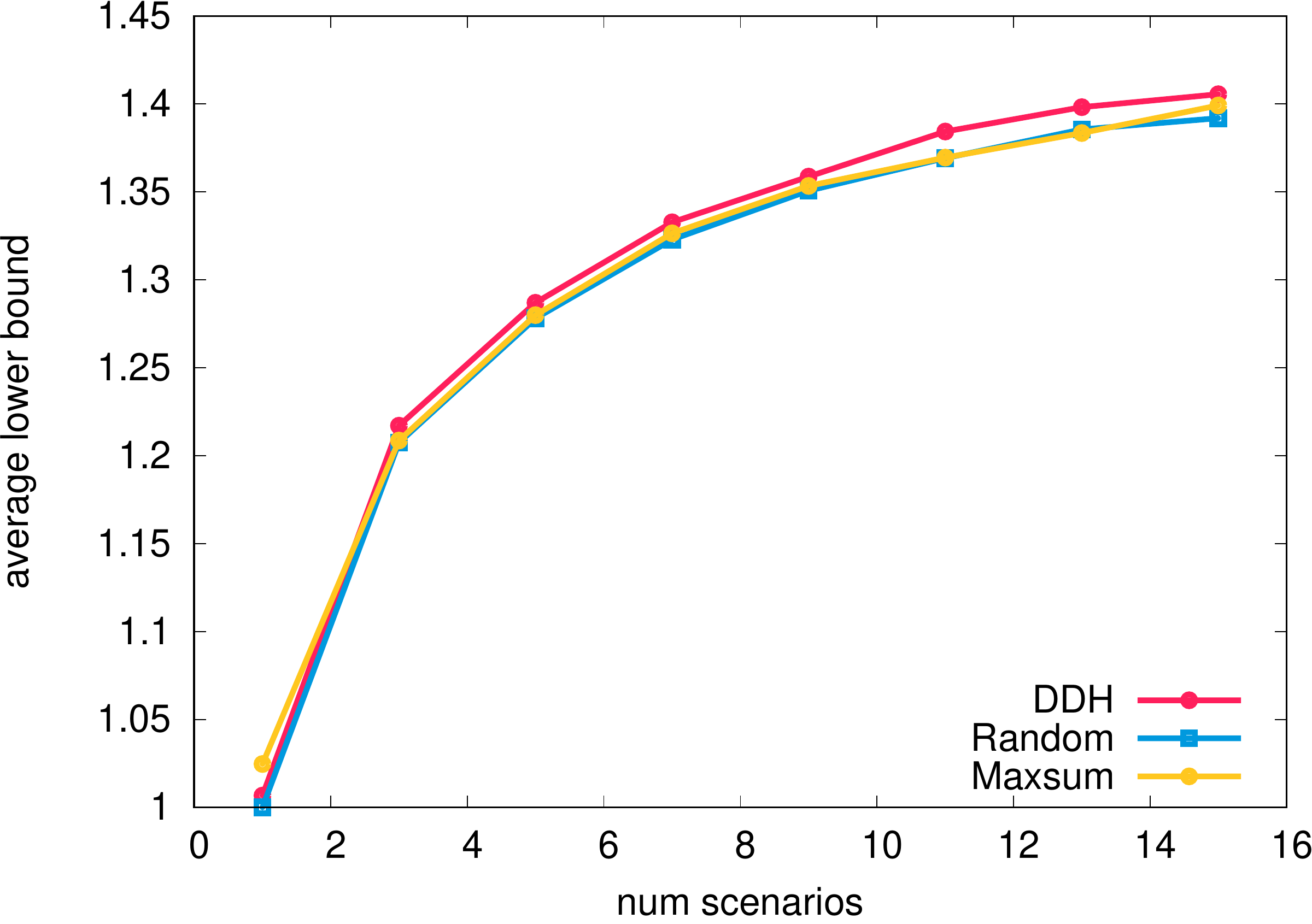}}%
\hfill
\subfigure[6 layers\label{fig:sp6bound}]{\includegraphics[width=0.48\textwidth]{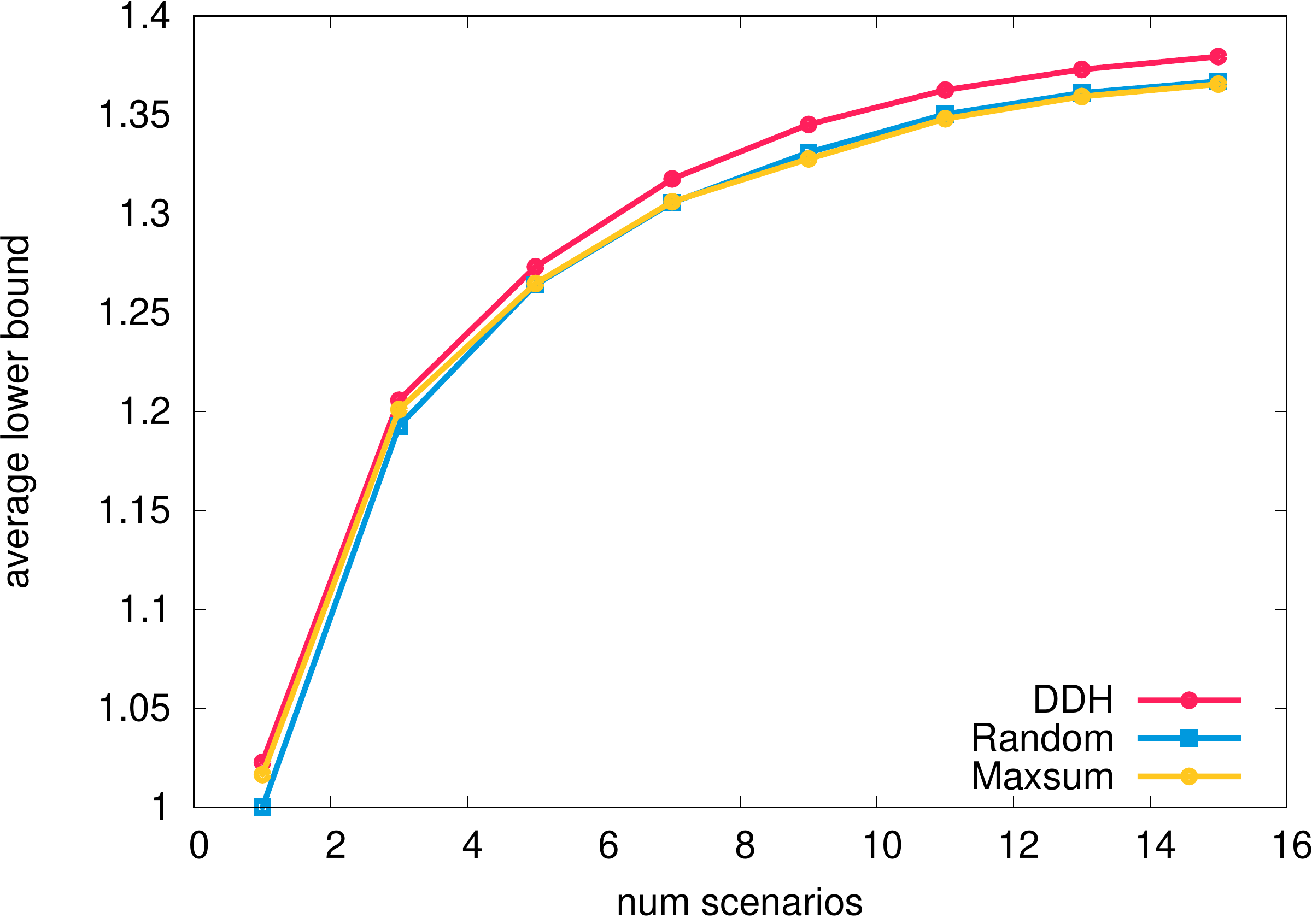}}
\subfigure[7 layers\label{fig:sp7bound}]{\includegraphics[width=0.48\textwidth]{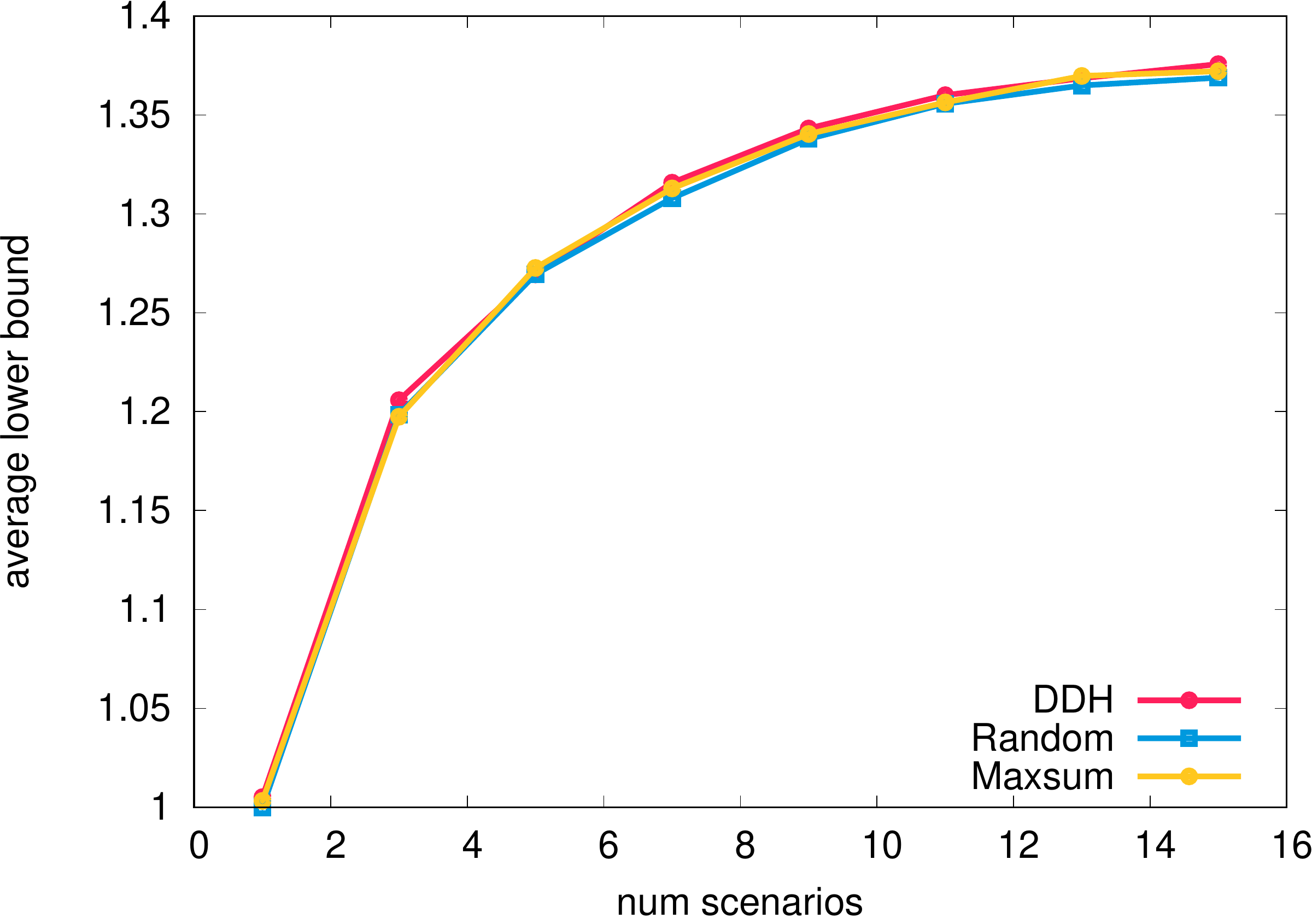}}%
\hfill
\subfigure[8 layers\label{fig:sp8bound}]{\includegraphics[width=0.48\textwidth]{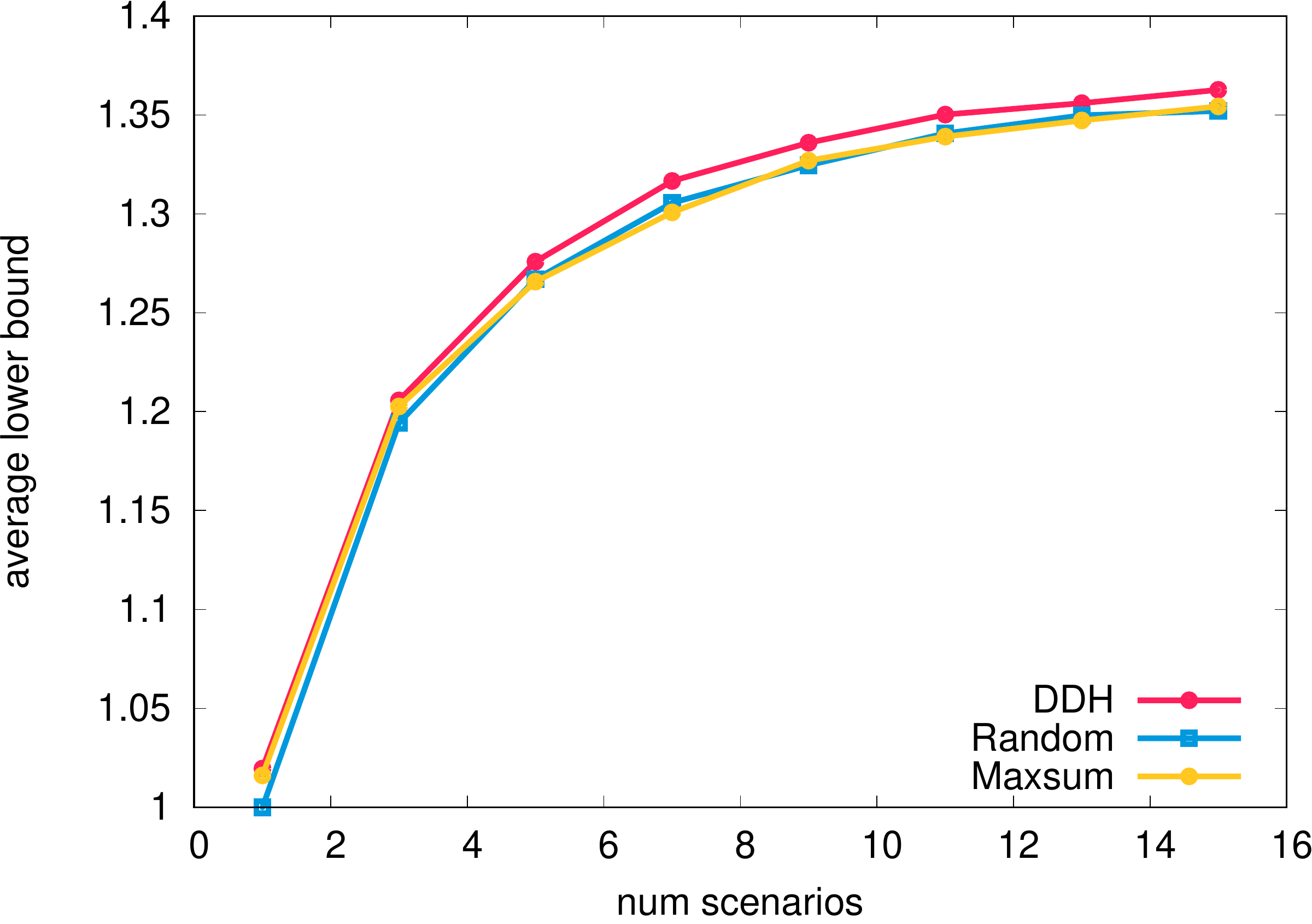}}
\end{center}
\caption{Average lower bounds depending on the number of starting scenarios for SP. Values are normalized so that the lower bound of Random with one scenarios is equal to one.\label{fig:spgap}}
\end{figure}

We show the lower bound after using CCG with 5 starting scenarios in Table~\ref{tab:spexp2}. While differences are less pronounced between methods in comparison to TSP, we can still note that DDH consistently outperforms both comparison methods.

\begin{table}[htbp]
\begin{center}
\begin{tabular}{r|rrr|rrr|rrr}
& \multicolumn{3}{c|}{Random} & \multicolumn{3}{c|}{Maxsum} & \multicolumn{3}{c}{DDH} \\
Nodes & Q1 & Avg & Q3 & Q1 & Avg & Q3 & Q1 & Avg & Q3 \\
\hline
5 & 352.1 & 351.8 & 350.0 & 358.7 & 358.4 & 358.9 & \textbf{364.9} & \textbf{363.4} & \textbf{363.8} \\
6 & 396.0 & 396.4 & 396.5 & 402.8 & 403.1 & 403.4 & \textbf{408.0} & \textbf{407.1} & \textbf{409.2} \\
7 & 443.5 & 443.9 & 444.7 & 449.8 & 450.5 & 451.1 & \textbf{455.7} & \textbf{457.1} & \textbf{459.1} \\
8 & 489.6 & 487.8 & 488.7 & 497.4 & 497.9 & 498.3 & \textbf{505.4} & \textbf{505.7} & \textbf{506.2}
\end{tabular}
\end{center}
\caption{Lower bounds after timelimit for SP. Q1 is first quartile, Avg is average, Q3 is third quartile of data. Best results in each category is bold.}\label{tab:spexp2}
\end{table}

To summarize the findings of these experiments, we note that DDH is an effective method in identifying relevant scenarios outperforming Random and Maxsum over all instance types and sizes. Furthermore, it is also efficient in the sense that we do not only get stronger lower bounds at the beginning of CCG, we also find better lower bounds at the end of our time limit. This even holds for instances that are larger than the instances used for training, which shows the practical applicability of our approach.

In the Appendix we show the feature importance returned by the decision tree classifier. The results indicate that for SP the features $f_{1,1}$ (average scenario entry), $f_{2,4}$ (scalar product of scenario and center scenario) and $f_{3,3},f_{3,4}$ (distance and quadratic distance to center in the problem-dependent distance measure) have the largest impact on the prediction performance. This shows that all three categories are useful for the prediction.

\section{Conclusions}

Generating constraints and variables is a staple method to solve robust optimization problems, due to its simplicity and wide applicability. Most commonly, the starting set of scenarios is generated without optimization effort, e.g. by choosing them randomly. To the best of our knowledge, this paper explores for the first time the potential of more sophisticated methods of producing starting scenarios. This leads to the Relevant Scenario Recognition Problem, which asks for a subset of scenarios that maximizes the bound provided by the master problem. We show that this problem is NP-hard, even if the robust problem can be solved in polynomial time. Furthermore solving this problem directly is only possible for small instance sizes; however, in this case, predicting starting scenarios is of less relevance.

We apply a data-driven machine learning method to predict relevant scenarios. We defined a set of instance features that do not depend on the dimension of the problem or the number of scenarios. This way, it is possible to generate labeled training data on small-dimensional problem instances, which may still be possible to be solved to optimality. Using a Random Forest Classifier, it is then possible to predict the relevance of scenarios for previously unseen problem instances, even of larger size. In computational experiments, we verified the efficacy of this approach. On two-stage traveling salesperson and shortest path problems, our data-driven prediction method outperformed alternative comparison methods consistently over all problem sizes, including on problems that are larger than those of the training set. The results indicate that designing dimension-independent features by using expert knowledge is valuable.

For future work it is desired to build a data collection of labeled instances which can be made publicly available.
Furthermore, it would be interesting to extend our work to convex uncertainty sets and constraint uncertainty.

\newpage
\appendix

\section*{Appendix}
\paragraph*{Proof of Theorem \ref{thm:exactIPformulationRO}}
 \ \\
In the following we assume that $X\subset \{ 0,1\}^n$ but the proof can be easily adjusted to the more general case that $X$ is an arbitrary finite set. We show that problem \eqref{eq:BSAP_RO} is equivalent to problem
\begin{align}
\max \ &\tau \label{eq:exactIPformulationRO}\\
s.t. \quad & \tau\le \sum_{i=1}^{m} z_{i}^x(c^i)^\top x  \quad \forall \ x\in X \label{constr:big-M}\\
& z_{i}^x\le u_i \quad \forall \ i\in [m], x\in X \label{constr:y_le_u}\\
& \sum_{i=1}^{m} z_i^x = 1 \quad \forall \ x\in X \label{constr:zSumto1}\\
& \sum_{i=1}^{m} u_i\le k \label{constr:kscenarios}\\
& u\in \{ 0,1\}^m, z^x\in \{0,1\}^{m} \ \forall \ x\in X .
\end{align}
The variables $u$ model the choice of scenarios, i.e. the optimal solution $u^*$ corresponds to the optimal index-set $\mathcal I^* = \left\{ i\in [m]: u_i=1\right\}$ of problem \eqref{eq:BSAP_RO}. Constraint \eqref{constr:kscenarios} ensures that at most $k$ scenarios can be chosen. The $z$-variables assign the worst-case scenario to each feasible solution $x\in X$. Constraints \eqref{constr:y_le_u} ensure that all $z$-variables are equal to zero if the scenario is not chosen. Otherwise it can be either zero or one. Constraints \eqref{constr:zSumto1} ensure that to each solution $x$ exactly one scenario $i$ has to be assigned. Constraints \eqref{constr:big-M} model the objective value of problem \eqref{eq:BSAP_RO} for a given choice $u$. Since we want to maximize $\tau$ in an optimal solution we always want to make the right-hand side as large as possible. Consider the constraint for a given $x\in X$ and the scenario $i\in [m]$ which was selected, i.e. for which we have $z_i^x=1$. In an optimal solution the $z$-variable will always choose the scenario $i$ which maximizes $(c^i)^\top x$ which proves the result.

\qed

The binary linear reformulation in the latter proof can be solved by any state-of-the-art integer programming solver as CPLEX or Gurobi. However the problem contains one constraint for each $x\in X$, hence the number of constraints can be exponential in the problem parameters if $X$ has exponential size. We can circumvent this problem by using an iterative constraint generation approach. Unfortunately, the Big-M constraints may still make the problem challenging to solve. To avoid these computational problems, we present an efficient data-driven heuristic to solve problem \eqref{eq:BSAP_RO} in Section \ref{sec:pearson}.

\paragraph*{Proof of Theorem \ref{thm:exactIPformulation2StageRO}}
 \ \\
The idea of the proof is similar to the one of Theorem \ref{thm:exactIPformulationRO} and hence the details are omitted. We show that problem \eqref{eq:BSAP_2StageRO} is equivalent to
\begin{align}
\max \ &\tau \label{eq:exactIPformulation2StageRO}\\
s.t. \quad & \tau\le c^\top x + \sum_{i\in [m]}z_i^y (d^i)^\top y^i \quad \forall \ x\in X, y=(y^1,\ldots ,y^m) \in R \label{constr:big-M2Stage}\\
& \sum_{i\in [m]} z_i^y = 1 \quad \forall y=(y^1,\ldots ,y^m)\in R \label{constr:t_sum_1}\\
& z_i^y\le u_i \quad \forall y=(y^1,\ldots ,y^m)\in R, \ i\in [m] \label{constr:t_le_u}\\
& \sum_{i=1}^{m} u_i\le k \label{constr:kscenarios2Stage}\\
& u\in \{ 0,1\}^m, z^y\in \{0,1\}^{m} \ \forall \ y=(y^1,\ldots ,y^m)\in R .
\end{align}
where $R=\left\{ (y^1,\ldots ,y^m): y^i\in Y, \  Ax + Dy^i\le b \ \forall \ i\in [m]\right\}$.
As in the proof of Theorem \ref{thm:exactIPformulationRO} the $u$-variables define the indices for the  solution $\mathcal I$ of problem \eqref{eq:BSAP_2StageRO}. Again constraints \eqref{constr:big-M2Stage} ensure that for each feasible solution the maximum scenario is selected. Constraints \eqref{constr:t_le_u} ensure that for the maximum we can only choose solutions $y^i$ and scenarios $d^i$ which are selected by the $u$-variables. \qed

\paragraph*{Feature Importance}

We list all 26 features we considered in Table~\ref{tab:features}, along with their importance as reported by the RFC model for TSP and SP. Recall that for TSP, prediction quality was slightly better than for SP. Indeed, we see that differences in feature importance is more nuanced. The three most important features for TSP are $f_{1,1}$ (average values), $f_{2,4}$ (scalar product with the center of the uncertainty set) and $f_{3,1}$ (optimal value of the deterministic problem). The three most important features for SP are $f_{3,1}$, $f_{3,4}$ (quadratic solution distance to center point) and $f_{1,1}$ as well.

\begin{table}[htbp]
\begin{center}
\begin{tabular}{l|rr}
Feature & TSP & SP \\
\hline
$f_{1,1}$ & 21.1 & 8.3 \\
$f_{1,2}$ & 2.3 & 2.7 \\
$f_{1,3}$ & 2.4 & 3.4 \\
\hline
$f_{2,1}$ & 3.9 & 5.6 \\
$f_{2,2}$ & 4.5 & 4.9 \\
$f_{2,3}$, $\kappa=1$ & 1.9 & 2.0 \\
$f_{2,3}$, $\kappa=2$ & 1.1 & 1.8 \\
$f_{2,3}$, $\kappa=3$ & 1.5 & 1.2 \\
$f_{2,3}$, $\kappa=4$ & 1.4 & 1.8 \\
$f_{2,3}$, $\kappa=5$ & 1.3 & 1.7 \\
$f_{2,4}$ & 15.7 & 6.6 \\
\hline
$f_{3,1}$ & 13.6 & 6.7 \\
$f_{3,2}$ & 0.7 & 1.2 \\
$f_{3,3}$ & 3.8 & 9.7 \\
$f_{3,4}$ & 3.9 & 9.7 \\
$f_{3,5}$, $\kappa=1$ & 0.7 & 1.1 \\
$f_{3,5}$, $\kappa=2$ & 1.0 & 1.1 \\
$f_{3,5}$, $\kappa=3$ & 1.1 & 1.7 \\
$f_{3,5}$, $\kappa=4$ & 1.4 & 1.3 \\
$f_{3,5}$, $\kappa=5$ & 1.0 & 1.6 \\
$f_{3,6}$ & 3.2 & 4.6 \\
$f_{3,7}$, $\alpha=1$ & 3.4 & 6.0 \\
$f_{3,7}$, $\alpha=2$ & 2.0 & 4.6 \\
$f_{3,7}$, $\alpha=3$ & 2.0 & 3.8 \\
$f_{3,7}$, $\alpha=4$ & 3.0 & 3.9 \\
$f_{3,7}$, $\alpha=5$ & 2.1 & 3.1
\end{tabular}
\end{center}
\caption{Feature importance in RFC prediction.}\label{tab:features}
\end{table}

\end{document}